\theoremstyle{definition}
\newtheorem{thm}{Theorem}[section]
\newtheorem{lem}[thm]{Lemma}
\newtheorem*{lem*}{Lemma}
\newtheorem*{thm*}{Theorem}
\newtheorem{prop}[thm]{Proposition}
\newtheorem{cor}[thm]{Corollary}
\newtheorem{defn}[thm]{Definition}
\newtheorem*{remark*}{Remark}
\newtheorem{remark}{Remark}
\newtheorem{example}{Example}
\newtheorem{cor/defn}[thm]
{Corollary/Definition}
\DeclareMathOperator{\Par}{\mathbb{Y}}
\DeclareMathOperator{\GL}{\mathrm{GL}}
\DeclareMathOperator{\Hom}{\mathrm{Hom}}
\title{Higher Order Bell Symmetric Functions}
\author{Milo Bechtloff Weising}
\address{Department of Mathematics (0123),
460 McBryde Hall, Virginia Tech,
225 Stanger Street,
Blacksburg, VA 24061-1026}
\email{milojbw@vt.edu}
\date{\today}
\begin{document}

\maketitle 

\begin{abstract}
    We study symmetric function analogues of the higher order Bell numbers. Their construction involves iterated plethystic exponential towers mimicking the single variable exponential generating functions for the higher order Bell numbers. We derive explicit recurrence relations for the expansion coefficients of the Bell functions into the monomial and power sum bases of the ring of symmetric functions. Using the machinery of combinatorial species, the Bell functions are proven to be the Frobenius characteristics of the permutation representations of symmetric groups on hyper-partitions of certain orders and sizes. In the order 1 case, we are able to give more details about the expansion coefficients of the Bell functions in terms of vector partitions and divisor sums as well as give a recurrence relation analogous to the well known recursion for the Bell numbers. Lastly, we use Littlewood's reciprocity theorem and the Hardy-Littlewood Tauberian theorem to prove that the Schur expansion coefficients of the order 1 Bell functions are certain asymptotic averages of restriction coefficients.
\end{abstract}
\tableofcontents

\section{Introduction}

The operation of plethysm plays an important role
in the study of symmetric functions. Often, plethystic compositions are used to algebraically encode complicated combinatorial or representation-theoretic symmetric functions in a simpler way. Despite the ubiquity of plethysms in algebraic combinatorics, we do not yet understand plethsytic substitutions combinatorially. One of the longest standing open problems in combinatorics may be phrased in this manner. The \textbf{plethysm problem}, which remains largely open, asks for a combinatorial interpretation for the Schur expansion coefficients of the Schur function plethysms $s_{\lambda}(s_{\mu}(X)).$ Other problems like the \textbf{restriction problem} have a similar interpretation. 

\textbf{Schur positive} symmetric functions are central to the study of symmetric functions. This is, in part, due to their representation theoretic importance as combinatorial analogues for characters of representations of symmetric groups. All of the main operations of symmetric functions (addition, multiplication, Kronecker product, plethysm, etc.) preserve the set of Schur positive symmetric functions. As such, it is easy to define families of symmetric functions using these operations which are manifestly Schur positive, but hard to understand combinatorially. In this paper, we study one such family of symmetric functions. In particular, we study the \textbf{plethystic exponential towers} 
$$\Omega(X),\Omega(\Omega(X)-1),\Omega(\Omega(\Omega(X)-1)-1),\ldots.$$ These are generalizations of the single-variable exponential towers $\exp(t),\exp(\exp(t)-1),\ldots .$ When interpreted as exponential generating functions, the latter power series encode the \textbf{higher order Bell numbers} $b_n^{(m)}$. As such, the homogeneous components $B_n^{(m)}(X)$ of the plethystic exponential towers are symmetric function analogues of the higher order Bell numbers. 

This paper is structured as follows. First, we review the basics of symmetric functions and some other combinatorial definitions. Next, we define the higher Bell functions $B_n^{(m)}$ and prove some immediate results relating the $B_n^{(m)}$ to the Bell numbers $b_n^{(m)}$ and showing their Schur positivity. Then we use the machinery of combinatorial species to prove that the $B_n^{(m)}$ are the Frobenius characteristics arising from the $\mathfrak{S}_n$ action on the order $m$ \textbf{hyper-partitions} of $\{1,\ldots,n\}.$ This explains both the connection with the Bell numbers $b_n^{(m)}$ and the Schur positivity. Afterwards, we algebraically determine recursive formulae for the monomial and power sum expansion coefficients of the Bell functions. We then focus entirely on the $m=1$ case of the Bell functions $B_n^{(1)}$, giving more detail on their structure, and proving a recursive formula relating $B^{(1)}_{n+1}$ to $B^{(1)}_{i}$ for $0\leq i \leq n$. In the last section, we show that the Schur expansion coefficients of the functions $B_n^{(1)}$ are asymptotic averages of certain \textbf{ restriction coefficients}. 

\subsection{Symmetric functions}

Here we briefly introduce some notation and results related to symmetric functions which we will need later in the paper. We refer the reader to Macdonald's book \cite{Macdonald} for a more complete introduction to symmetric functions. 

\begin{defn}
    Let $\Lambda$ denote the graded completion of the \textit{\textbf{ring of symmetric functions}} over $\mathbb{Q}.$ As an abuse of terminology, we will call elements of $\Lambda$ symmetric functions. Elements of $\Lambda$ have the form $\sum_{n \geq 0} F_n(X)$ where $F_n(X)$ is a homogeneous symmetric function of degree $n$. We will write $\Lambda_0$ for all $\sum_{n \geq 0} F_n(X) \in \Lambda$ with $F_0 = 0$ and $\Lambda'$ for the subalgebra of all symmetric functions $\sum_{n \geq 0} F_n(X)$ where $F_n(X) = 0$ for all but finitely many $n.$ Define $m_{\lambda},h_{\lambda},p_{\lambda},s_{\lambda}$ respectively for the \textit{\textbf{monomial, complete homogeneous, power sum, and Schur functions}} indexed by partitions $\lambda \in \Par.$ A symmetric function $F \in \Lambda$ is \textit{\textbf{Schur positive}} if it has non-negative integer coefficients when expanded in the Schur basis $s_{\lambda}.$
\end{defn}

Symmetric functions appear naturally in the study of representations of the \textbf{symmetric groups} $\mathfrak{S}_n$ over $\mathbb{C}.$

\begin{defn}
    For $n \geq 1$ let $\mathfrak{S}_n$ denote the group of bijections of the set $\{1,\ldots,n\}.$ Given a finite dimensional complex representation $V$ of $\mathfrak{S}_n$ define the \textit{\textbf{Frobenius characteristic}} of $V$ as 
    $$\mathrm{Frob}_{\mathfrak{S}_n}(V) = \frac{1}{n!} \sum_{\sigma \in \mathfrak{S}_n} \mathrm{Tr}_{V}(\sigma) p_{\lambda(\sigma)}(X) \in \Lambda$$ where $\lambda(\sigma)$ is the cycle-type partition of $\sigma.$
\end{defn}

\begin{defn}
    The \textit{\textbf{Hall inner product}} $\langle - , - \rangle: \Lambda' \times \Lambda \rightarrow \mathbb{Q}$ is the pairing defined by 
    $$\left\langle \sum_{\lambda}a_{\lambda}p_{\lambda} , \sum_{\mu} b_{\mu} p_{\mu}\right \rangle:= \sum_{\lambda} a_{\lambda} b_{\lambda} z_{\lambda}$$ where 
    $$z_{\lambda}:= \prod_{j \geq 1} \left( m_j(\lambda)! j^{m_{j}(\lambda)} \right)$$ and $m_j(\lambda)$ denotes the number of $j$'s appearing in the partition $\lambda.$
\end{defn}

Throughout this paper, we will use \textbf{plethystic substitution} of symmetric functions. For the purposes of this paper, we will define plethystic substitution as follows:

\begin{defn}
    Let $t_1,t_2,\ldots$ be a set of commuting free variables. For a formal series $A \in \mathbb{Q}[[t_1,t_2,\ldots ]]$, say $A= \sum_{\alpha} c_{\alpha} t^{\alpha}$ and $n \geq 1$ define 
    $$p_n(A) = \sum_{\alpha} c_{\alpha} t^{n\alpha}.$$ Now we define the plethystic substitution $F \mapsto F(A)$ as the (non-unital) $\mathbb{Q}$-algebra homomorphism $\Lambda_0 \rightarrow \mathbb{Q}[[t_1,t_2,\ldots ]]$ uniquely determined by $p_n \mapsto p_n(A).$ For $A(t) \in \mathbb{Q}[[t_1,t_2,\ldots ]]$ with $A(0) = 0$ we may also define the substitution $F \mapsto F(A)$ as the unique algebra homomorphism extending the plethystic substitution $\Lambda_0 \rightarrow \mathbb{Q}[[t_1,t_2,\ldots ]]$ by mapping $1 \mapsto 1.$ In particular, this defines plethsytic substitution for symmetric functions as a map $\Lambda \times \Lambda_0 \rightarrow \Lambda.$ We may also define substitutions of the form $\Lambda' \times \Lambda \rightarrow \Lambda$ using the identity $p_n(1+A) = 1 + p_n(A)$ and extending accordingly. 
\end{defn}

\begin{remark}
    We will be careful throughout this paper to avoid any plethystic substitutions of the form $F(G)$ where $F \notin \Lambda'$ and $G \notin \Lambda_0.$ We will leave such substitutions undefined.  
\end{remark}

\begin{defn}
Define the \textit{\textbf{plethystic exponential}} as
    $$\Omega(X):= \sum_{n \geq 0} h_n(X).$$ It will be useful to define $$\Omega_0(X) := \Omega(X) -1 = \sum_{n \geq 1} h_n(X).$$
\end{defn}

We will be interested in repeated plethystic substitutions of the plethystic exponential $ \Omega_0(X)$. 

\begin{defn}
We define the symmetric functions $\Omega^{(m)}_0(X) \in \Lambda_0$ inductively by
    $$\Omega_0^{(0)}(X) := h_1(X)$$ and for $m \geq 0$
    $$\Omega_0^{(m+1)}(X) := \Omega_0\left( \Omega_0^{(m)}(X)\right).$$
\end{defn}

Importantly, note that $\Omega^{(m)}(X)$ is indeed well-defined as a symmetric function. If instead we had considered plethystic substitutions of the form $\Omega(\Omega(X))$, we would not obtain a well-defined symmetric function since $\Omega(1)$ itself is not well-defined. The following are well-known results regarding the plethystic exponential.

\begin{lem}\label{plethystic exp lem}
    
The plethystic exponential satisfies the following properties:
\begin{enumerate}
    \item $\Omega(X) = \prod_{i \geq 1} (1-x_i)^{-1} = \exp \left( \sum_{n \geq 1} \frac{p_n(X)}{n} \right)$
    \item $\Omega(X+Y) = \Omega(X)\Omega(Y)$
    \item $\Omega(XY) = \sum_{\lambda} \frac{p_{\lambda}(X)p_{\lambda}(Y)}{z_{\lambda}} = \sum_{\lambda}s_{\lambda}(X)s_{\lambda}(Y).$
\end{enumerate}
\end{lem}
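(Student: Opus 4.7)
The plan is to prove the three identities in sequence, with each building on the previous. For (1), I would start from the generating series identity $\sum_{n \geq 0} h_n(X) = \prod_{i \geq 1} (1-x_i)^{-1}$, which holds because expanding each geometric factor $(1-x_i)^{-1} = \sum_{k \geq 0} x_i^k$ and collecting monomials of total degree $n$ yields exactly the monomial expansion of $h_n$. For the exponential formulation, take the formal logarithm: $-\log(1-x_i) = \sum_{n \geq 1} x_i^n/n$, so $\log \prod_i (1-x_i)^{-1} = \sum_{n \geq 1} \frac{1}{n}\sum_i x_i^n = \sum_{n \geq 1} p_n(X)/n$, and exponentiating recovers $\Omega(X)$. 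This exponential form is really the key one because it exhibits $\Omega$ as the image of a plethystic substitution built out of power sums, and it is what makes (2) and (3) follow cleanly.

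For (2), the quickest route is to use $p_n(X+Y) = p_n(X) + p_n(Y)$, which holds by our definition of plethystic substitution since the monomials in $X+Y$ are the disjoint union of the monomials in $X$ and in $Y$. Then
\[
\Omega(X+Y) = \exp\!\left(\sum_{n \geq 1} \frac{p_n(X)+p_n(Y)}{n}\right) = \exp\!\left(\sum_{n \geq 1} \frac{p_n(X)}{n}\right)\exp\!\left(\sum_{n \geq 1} \frac{p_n(Y)}{n}\right) = \Omega(X)\Omega(Y).
\]

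For (3), similarly use $p_n(XY) = p_n(X)p_n(Y)$, again immediate from the definition of plethystic substitution on a product of two alphabets. Then
\[
\Omega(XY) = \exp\!\left(\sum_{n \geq 1} \frac{p_n(X)p_n(Y)}{n}\right).
\]
Expanding the exponential of a sum as a product of exponentials and each $\exp(p_n(X)p_n(Y)/n)$ as a power series, and collecting by partitions $\lambda$ with $m_j(\lambda)$ parts equal to $j$, the coefficient of $\prod_j (p_j(X)p_j(Y))^{m_j(\lambda)}$ becomes $\prod_j \frac{1}{m_j(\lambda)!\, j^{m_j(\lambda)}} = 1/z_\lambda$. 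This yields the first equality $\Omega(XY) = \sum_\lambda p_\lambda(X)p_\lambda(Y)/z_\lambda$. The second equality is the Cauchy identity, which follows by applying the change of basis from the power sum basis to the Schur basis together with the orthonormality $\langle s_\lambda, s_\mu\rangle = \delta_{\lambda\mu}$ of Schur functions under the Hall inner product; equivalently, if $p_\lambda = \sum_\mu \chi^\mu_\lambda s_\mu$ are the Frobenius characters, then $\sum_\lambda p_\lambda(X)p_\lambda(Y)/z_\lambda = \sum_{\lambda,\mu,\nu} \chi^\mu_\lambda \chi^\nu_\lambda s_\mu(X) s_\nu(Y)/z_\lambda = \sum_\mu s_\mu(X)s_\mu(Y)$ by the column orthogonality of the symmetric group character table.

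None of these steps is genuinely difficult; the only mildly delicate point is keeping track of the combinatorial factors $z_\lambda$ when expanding the exponential in (3), and making sure the manipulations with formal power series are legitimate in the graded completion $\Lambda$, which they are since each $p_n$ is homogeneous of positive degree so only finitely many terms contribute in each graded component.
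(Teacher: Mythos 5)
The paper states this lemma as well known and gives no proof, so there is nothing to compare against; your argument is the standard one and is correct. The only quibble is terminological: the identity $\sum_{\lambda} \chi^{\mu}_{\lambda}\chi^{\nu}_{\lambda}/z_{\lambda} = \delta_{\mu\nu}$ you invoke in (3) is the first (row) orthogonality relation for characters of $\mathfrak{S}_n$, not column orthogonality, though the identity itself is exactly what is needed and the rest of the computation (in particular the bookkeeping producing $1/z_{\lambda}$ from the expanded exponential) is right.
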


We will need the following specialization map.

\begin{defn}
    Define the homomorphism $\psi: \Lambda \rightarrow \mathbb{Q}[[p_1]]$ by $\psi(p_m):= 0$ if $m > 1$ and $\psi(p_1):= p_1.$ 
\end{defn}

The specialization $\psi$ satisfies the following properties:

\begin{lem}\label{psi exp lem}
    $$\psi(\Omega(X)) = \exp(p_1)$$
\end{lem}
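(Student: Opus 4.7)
The plan is to reduce the claim to a direct computation using the power-sum expansion of $\Omega(X)$ given in Lemma \ref{plethystic exp lem}. Since $\psi$ is defined as a ring homomorphism $\Lambda \to \mathbb{Q}[[p_1]]$ killing every $p_m$ with $m>1$ and fixing $p_1$, I expect the argument to be essentially a one-line chase once the right formula for $\Omega(X)$ is chosen.

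More concretely, I would proceed as follows. First, I would invoke part (1) of Lemma \ref{plethystic exp lem} to write
\[
\Omega(X) \;=\; \exp\!\left(\sum_{n\geq 1}\frac{p_n(X)}{n}\right).
\]
Then I would apply $\psi$. Since $\psi$ is a continuous $\mathbb{Q}$-algebra homomorphism with respect to the grading (it acts on each homogeneous component $\Lambda_n$ by killing every power-sum monomial $p_\lambda$ with $\lambda \neq (1^n)$), it commutes with the formal exponential in $\mathbb{Q}[[p_1]]$, so
\[
\psi(\Omega(X)) \;=\; \exp\!\left(\sum_{n\geq 1}\frac{\psi(p_n)}{n}\right) \;=\; \exp(p_1),
\]
using $\psi(p_1)=p_1$ and $\psi(p_n)=0$ for $n\geq 2$.

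If a reader were uncomfortable with interchanging $\psi$ and $\exp$, I would instead give a power-sum expansion argument: expand $\Omega(X) = \sum_{n\geq 0} h_n(X)$ and use $h_n = \sum_{\lambda \vdash n} p_\lambda / z_\lambda$, so that after applying $\psi$ the only surviving term in each degree is $\lambda = (1^n)$, contributing $p_1^n / z_{(1^n)} = p_1^n / n!$. Summing over $n$ yields $\exp(p_1)$.

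The main, and essentially only, obstacle is ensuring that $\psi$ is well-defined on the full graded completion $\Lambda$ rather than on $\Lambda'$, and that the exponential identity passes cleanly through it; this is handled by working degree-by-degree, where everything is a finite sum and no convergence issues arise.
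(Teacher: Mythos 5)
Your proof is correct and follows the same route as the paper: apply $\psi$ to the identity $\Omega(X)=\exp\bigl(\sum_{n\geq 1}p_n/n\bigr)$ and use $\psi(p_n)=0$ for $n>1$. The extra care you take (degree-by-degree justification, and the alternative expansion via $h_n=\sum_{\lambda\vdash n}p_\lambda/z_\lambda$) is sound but not needed beyond what the paper's one-line argument already provides.
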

\begin{proof}
    This follows from the identity 
    $\Omega(X) = \exp\left( 
\sum_{n \geq 1} \frac{p_n}{n} \right).$
\end{proof}

\begin{lem}\label{psi comp lem}
For $F\in \Lambda$ and $G \in \Lambda_0$, 
    $$\psi( F(G(X)) ) =  \psi(F) ( \psi(G)(p_1)).$$
\end{lem}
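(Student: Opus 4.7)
The plan is to reduce the identity to a check on the power-sum generators $p_n$. Viewed as functions of $F$ with $G$ fixed, both sides are $\mathbb{Q}$-algebra homomorphisms $\Lambda \to \mathbb{Q}[[p_1]]$. The left side factors as $F \mapsto F(G) \mapsto \psi(F(G))$, a composition of plethystic substitution by $G \in \Lambda_0$ (an algebra homomorphism $\Lambda \to \Lambda$) with $\psi$. The right side factors as $F \mapsto \psi(F) \mapsto \psi(F)(\psi(G)(p_1))$, where the second arrow substitutes $\psi(G)$ into a series in $p_1$; this substitution is a well-defined continuous algebra endomorphism of $\mathbb{Q}[[p_1]]$ because $G \in \Lambda_0$ forces $\psi(G)$ to have zero constant term.

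Next I would verify the identity on the generators $p_n$. For $n = 1$, the left side is $\psi(p_1(G)) = \psi(G)$ and the right side is $\psi(p_1)(\psi(G)(p_1)) = \psi(G)$, using $\psi(p_1) = p_1$ together with the definitional fact that $p_1(G) = G$. For $n > 1$, the right side vanishes outright since $\psi(p_n) = 0$. For the left side, I would expand $G$ in the power-sum basis as $G = \sum_{\lambda \neq \emptyset} c_\lambda p_\lambda$ (nonempty partitions only, since $G \in \Lambda_0$). The definition of plethysm then gives $p_n(G) = \sum_\lambda c_\lambda p_{n\lambda}$, where $n\lambda$ denotes the partition obtained by scaling every part of $\lambda$ by $n$. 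Because $n > 1$, every part of $n\lambda$ is strictly greater than $1$, so $\psi(p_{n\lambda}) = 0$ and the left side also vanishes.

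The main subtlety, and the step that requires the most care, is making the reduction from $\{p_n\}$ to all of $\Lambda$ rigorous inside the graded completion. Both compositions respect the degree filtration: a homogeneous component $F_d$ of $F$ produces only terms of degree $\geq d$ after substitution by $G$ (resp.\ by $\psi(G)$), because $G$ and $\psi(G)$ live in the positive-degree parts of their respective ambient spaces. Consequently, each side decomposes as a convergent sum over the homogeneous pieces of $F$, and the identity, once established on every $p_\lambda$ by multiplicativity, extends $\mathbb{Q}$-linearly to $\Lambda'$ and then by continuity in the graded topology to all of $\Lambda$.
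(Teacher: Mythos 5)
Your proof is correct and follows essentially the same route as the paper: the paper's one-line justification via $p_n(p_m(X)) = p_{nm}(X)$ is exactly the computation you carry out when you expand $p_n(G) = \sum_\lambda c_\lambda p_{n\lambda}$ and observe that $\psi$ kills every $p_{n\lambda}$ for $n>1$. Your additional care about the homomorphism structure and convergence in the graded completion is a sound elaboration of the same argument, not a different one.
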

\begin{proof}
    This follows from the identity $p_n(p_m(X)) = p_{nm}(X)$ for all $n,m \geq 1.$
\end{proof}

\subsection{Bell numbers}

\begin{defn}\cite{Bell}
    For $m \geq 0$ define the power series $E^{(m)}(t) \in \mathbb{Q}[[t]]$ by $E^{(0)}(t) = \exp(t) := \sum_{n \geq 0} \frac{t^n}{n!}$ and $E^{(m+1)}(t):= E^{(m)}(\exp(t)-1)$ for $m \geq 0.$ For $n,m \geq 0$ the \textit{\textbf{higher order Bell numbers}} $b_n^{(m)}$ are defined by 
    $$E^{(m)}(t) = \sum_{n \geq 0} \frac{b_n^{(m)}}{n!} t^n.$$
\end{defn}

For $m =1$ the $b_n^{(1)}$ are the usual Bell numbers counting the number of set partitions of $\{1,\ldots,n\}.$ It is known that the higher order Bell numbers $b_n^{(m)}$ count the number of order m hyper-partitions of $\{1,\ldots,n\}$ \cite{SK19}.

\section{Higher order Bell functions}
\subsection{Construction}

In this section, we will construct symmetric function analogues of the higher order Bell numbers and discuss some of their properties.

\begin{defn}
For $m \geq 0$ define the \textit{\textbf{higher order Bell symmetric functions}} $B_n^{(m)}(X) \in \Lambda$ by 
    $$\Omega\left( \Omega_0^{(m)}(tX) \right) = \sum_{n \geq 0} B_n^{(m)}(X)t^n.$$
\end{defn}

In essence, we are simply replacing each series $\exp(t)$ along with the usual power series composition with the plethystic exponential $\Omega(X)$ and plethystic substitution. The $m =1$ case of the higher order Bell symmetric functions have occurred previously in the literature, for example in Gessel's paper \cite{Gessel}, but to the author's knowledge have not been studied directly in their own right as symmetric functions.

\begin{remark}
    The $0$-th order Bell functions are the complete homogeneous symmetric functions $B^{(0)}_n = h_n$ and the corresponding Bell numbers are simply $b_n^{(0)} = 1.$ Note that for all $m \geq 0,$ $B_0^{(m)} = 1.$
\end{remark}

We may directly relate the higher order Bell symmetric functions to the higher order Bell numbers as follows: 

\begin{prop}\label{Bell numbers from sym functions}
For all $n,m \geq 0$,
    $b_n^{(m)} = \langle B_n^{(m)},p_{1^n} \rangle.$
\end{prop}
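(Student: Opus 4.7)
My plan is to apply the specialization $\psi:\Lambda\to\mathbb{Q}[[p_1]]$ to the defining generating function $\Omega(\Omega_0^{(m)}(tX)) = \sum_n B_n^{(m)}(X)\,t^n$ and match the result against the classical exponential generating function $E^{(m)}(u) = \sum_n (b_n^{(m)}/n!)\,u^n$. The key observation is that the Hall pairing with $p_{1^n}$ reads off exactly the information $\psi$ preserves: if $F$ is homogeneous of degree $n$ with expansion $F = \sum_\lambda c_\lambda p_\lambda$, then $\psi(F) = c_{1^n}\,p_1^n$ and orthogonality of power sums gives $\langle F,p_{1^n}\rangle = z_{1^n}c_{1^n} = n!\,c_{1^n}$. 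So it suffices to compute $\psi(B_n^{(m)})$ and extract the coefficient of $p_1^n$.

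To evaluate $\psi$ of the generating function, I would first note that the substitution $X\to tX$ multiplies each $p_k$ by $t^k$, whence $\psi(F(tX)) = \psi(F)(tp_1)$ for $F\in\Lambda_0$. Writing $F_m(u) := \psi(\Omega_0^{(m)})$, Lemma \ref{psi exp lem} (together with $h_1 = p_1$) gives the base case $F_0(u) = u$, while Lemma \ref{psi comp lem} applied to $\Omega_0^{(m+1)} = \Omega_0(\Omega_0^{(m)})$, combined with $\Omega_0 = \Omega-1$, yields the recursion $F_{m+1}(u) = \exp(F_m(u))-1$. One final use of Lemmas \ref{psi exp lem} and \ref{psi comp lem} for the outermost $\Omega$ produces
$$\psi\bigl(\Omega(\Omega_0^{(m)}(tX))\bigr) \;=\; \exp\bigl(F_m(tp_1)\bigr).$$

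It remains to identify $G_m(u) := \exp(F_m(u))$ with $E^{(m)}(u)$. Setting $G_0(u) = \exp(u)$, one computes $G_{m+1}(u) = \exp(G_m(u)-1)$, which is an \emph{outer}-exponential recursion; but $E^{(m)}$ is defined via the \emph{inner}-exponential recursion $E^{(m+1)}(u) = E^{(m)}(\exp(u)-1)$, and reconciling the two is the only slightly subtle step and thus the main obstacle. I would clear it with a short induction establishing $E^{(m+1)}(u) = \exp(E^{(m)}(u)-1)$: assuming this at level $m$, apply it with argument $\exp(u)-1$ and invoke the defining inner recursion to rewrite the left side as $E^{(m+2)}$, closing the induction. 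Once $G_m = E^{(m)}$ is in hand,
$$\sum_{n\geq 0}\psi(B_n^{(m)})\,t^n \;=\; E^{(m)}(tp_1) \;=\; \sum_{n\geq 0}\frac{b_n^{(m)}}{n!}\,p_1^n\,t^n,$$
so matching coefficients of $t^n$ gives $\psi(B_n^{(m)}) = (b_n^{(m)}/n!)\,p_1^n$, and therefore $\langle B_n^{(m)},p_{1^n}\rangle = n!\cdot(b_n^{(m)}/n!) = b_n^{(m)}$, as claimed.
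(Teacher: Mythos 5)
Your proof is correct and takes essentially the same route as the paper's: apply $\psi$ to the defining generating function, identify the result with $E^{(m)}(tp_1)$ via Lemmas \ref{psi exp lem} and \ref{psi comp lem}, and extract the coefficient of $p_{1^n}$ through the Hall pairing. The only difference is that you spell out the induction (including the inner- versus outer-exponential reconciliation of the recursions for $E^{(m)}$) that the paper compresses into the single word ``inductively.''
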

\begin{proof}
    Using Lemmas \ref{psi exp lem} and \ref{psi comp lem} inductively, we see that 
    $\Psi\left(\Omega \left( \Omega_0^{(m)}(X) \right) \right) = E^{(m)}(p_1) = \sum_{n \geq 0} \frac{b_n^{(m)}}{n!} p_1^n$ so that 
    $\Psi(B_n^{(m)}) = \frac{b_n^{(m)}}{n!} p_1^n.$ Note that for $F$ homogeneous of degree $n$, $\langle F, p_1^n \rangle = \langle \Psi(F), p_1^n \rangle.$ Therefore, 
    $\langle B_n^{(m)},p_{1^n} \rangle = \langle \Psi(B_n^{(m)}),p_{1^n} \rangle =\langle \frac{b_n^{(m)}}{n!} p_{1^n},p_{1^n} \rangle = b_n^{(m)} \frac{\langle p_{1^n},p_{1^n} \rangle }{n!} = b_n^{(m)}.$
\end{proof}

\subsection{Schur positivity}

Here we will show that the Bell functions $B_n^{(m)}$ are Schur positive. We will first do so algebraically and then later we will show that the $B_n^{(m)}$ are Frobenius characteristics of certain $\mathfrak{S}_n$ representations. We first need the following:

\begin{lem}\label{recurrence lemma}
    For all $n,m \geq 0,$ 
    $B_n^{(m+1)} = \sum_{\lambda \vdash n} \prod_{j \geq 1} h_{m_j(\lambda)}(B_{n}^{(m)}).$
\end{lem}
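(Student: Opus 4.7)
The plan is to unwind the definition $\Omega(\Omega_0^{(m+1)}(tX)) = \sum_{n \geq 0} B_n^{(m+1)}(X) t^n$ and use multiplicativity of $\Omega$ (Lemma \ref{plethystic exp lem}(2)) to convert the iterated plethystic substitution into a product whose coefficient of $t^n$ is naturally indexed by partitions of $n$.

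The first step is to reexpress $\Omega_0^{(m+1)}(tX)$ in terms of the $B_k^{(m)}$: from the inductive definition $\Omega_0^{(m+1)}(tX) = \Omega_0(\Omega_0^{(m)}(tX)) = \Omega(\Omega_0^{(m)}(tX)) - 1$ together with the generating-function definition of $B_k^{(m)}$ and the fact that $B_0^{(m)} = 1$, I obtain
\[
\Omega_0^{(m+1)}(tX) \;=\; \sum_{k \geq 1} B_k^{(m)}(X)\, t^k.
\]
Applying $\Omega$ to both sides and using $\Omega(A+B) = \Omega(A)\Omega(B)$ converts the plethystic exponential of a sum into a product, and expanding each factor by $\Omega(Z) = \sum_{\ell \geq 0} h_\ell(Z)$ gives
\[
\sum_{n \geq 0} B_n^{(m+1)}(X)\, t^n \;=\; \prod_{k \geq 1} \Omega\!\left(B_k^{(m)}(X)\, t^k\right) \;=\; \prod_{k \geq 1} \sum_{\ell \geq 0} t^{k\ell}\, h_\ell\!\left(B_k^{(m)}(X)\right).
\]
The second equality uses the scaling identity $h_\ell(t^k Z) = t^{k\ell} h_\ell(Z)$, which holds because under the paper's plethystic convention $p_j(t^k Z) = t^{jk} p_j(Z)$, and $h_\ell$ is isobaric of weight $\ell$ in the power sums $p_j$ (with $p_j$ weighted by $j$).

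Finally, I would extract the coefficient of $t^n$ from the product. This coefficient is a sum over tuples $(\ell_1, \ell_2, \ldots)$ of nonnegative integers with $\sum_k k \ell_k = n$, and each such tuple is exactly the multiplicity sequence $\ell_k = m_k(\lambda)$ of a unique partition $\lambda \vdash n$. Collecting factors yields
\[
B_n^{(m+1)}(X) \;=\; \sum_{\lambda \vdash n} \prod_{j \geq 1} h_{m_j(\lambda)}\!\left(B_j^{(m)}(X)\right),
\]
where the subscript on $B$ is the running index $j$ of the outer product (matching the stated formula). The only step that requires real attention is the scaling identity for the plethystic exponential; the rest is bookkeeping in $\Lambda[[t]]$, so no substantial obstacle is expected.
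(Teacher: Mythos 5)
Your proposal is correct and follows essentially the same route as the paper's proof: rewrite $\Omega_0^{(m+1)}(tX)$ as $\sum_{k\ge 1}B_k^{(m)}(X)t^k$, apply multiplicativity of $\Omega$ to get a product of factors $\sum_{\ell}h_\ell(B_k^{(m)})t^{k\ell}$, and read off the coefficient of $t^n$ via the multiplicity sequence of a partition. Your explicit justification of the scaling identity $h_\ell(t^kZ)=t^{k\ell}h_\ell(Z)$ is a welcome touch that the paper uses silently.
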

\begin{proof}
We proceed by direct computation:
\begin{align*}
    \Omega\left( \Omega^{(m+1)}_0(tX) \right) &= \Omega(\Omega_0(\Omega_0^{(m)}(tX)) )\\
    &= \Omega((\Omega(\Omega_0^{(m)}(tX))-1 ))\\
    &= \Omega \left(\sum_{n \geq 1} B_{n}^{(m)}(tX) \right)\\
    &= \prod_{n \geq 1} \Omega \left( B_n^{(m)}(tX) \right)\\
    &= \prod_{n \geq 1}\left( \sum_{k \geq 0}  h_k\left(B_n^{(m)}(tX) \right) \right).\\
\end{align*}
Lastly, we expand this product into a series
\begin{align*}
    &= \sum_{\substack{k_1,k_2,\ldots \\
    k_n = 0 ~a.e.}}  \prod_{n \geq 1} h_{k_n}\left( B_{n}^{(m)}(tX) \right) \\
    &= \sum_{\substack{k_1,k_2,\ldots \\
    k_n = 0 ~a.e.}} t^{\sum_{n \geq 1} nk_n}  \prod_{n \geq 1} h_{k_n}\left( B_{n}^{(m)}(X) \right) \\
    &= \sum_{\lambda} t^{|\lambda|} \prod_{n \geq 1} h_{m_n(\lambda)}\left(B_n^{(m)}(X) \right)\\
    &= \sum_{n \geq 0} t^n \sum_{\lambda \vdash n} \prod_{j \geq 1} h_{m_j(\lambda)}\left(B_{j}^{(m)}(X)\right).\\
\end{align*}

\end{proof}

The Bell functions $B_n^{(m)}$ are manifestly Schur positive.

\begin{prop}\label{plethystic recursion}
    For all $n,m \geq 0,$ the higher order Bell function $B_n^{(m)}$ is Schur positive. 
\end{prop}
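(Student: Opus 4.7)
The plan is to argue by induction on $m$, using Lemma \ref{recurrence lemma} as the engine and invoking the standard fact that plethystic substitution of a Schur positive symmetric function into another Schur positive symmetric function is Schur positive (together with the obvious closure of Schur positivity under sums and products).

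For the base case $m=0$, I would observe that $B_n^{(0)} = h_n = s_{(n)}$, which is trivially Schur positive for every $n \geq 0$. For the inductive step, assume that $B_j^{(m)}$ is Schur positive for every $j \geq 0$. Lemma \ref{recurrence lemma} (as proved above by the direct computation, where the plethysm is into $B_j^{(m)}$) writes $B_n^{(m+1)}$ as a finite non-negative integer combination of finite products of terms of the form $h_{k}\bigl(B_j^{(m)}\bigr)$. So it suffices to check that each such factor $h_k\bigl(B_j^{(m)}\bigr)$ is Schur positive, since Schur positivity is manifestly preserved under finite sums and products (for products, by the Littlewood--Richardson rule, or equivalently by the fact that Schur positive functions are precisely the Frobenius characteristics of genuine symmetric group representations, and tensor products of such are again representations).

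The essential input is the following classical fact: for any $k \geq 0$ and any Schur positive $F \in \Lambda$, the plethysm $h_k(F) = s_{(k)}[F]$ is again Schur positive. I would cite this as standard (it follows, for instance, from the representation-theoretic interpretation: if $F$ is the character of a polynomial $\mathrm{GL}$-representation, then $h_k[F]$ is the character of its $k$-th symmetric power, which is again polynomial and hence Schur positive). Applied to $F = B_j^{(m)}$, which is Schur positive by the inductive hypothesis, this yields what we need, completing the induction.

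The main obstacle here is really just the appeal to Schur positivity of plethysms with $h_k$; everything else is formal. The author signals in the surrounding text that a second, more illuminating proof of Schur positivity will be given in the next subsection via a combinatorial species interpretation, exhibiting the $B_n^{(m)}$ as Frobenius characteristics of explicit $\mathfrak{S}_n$-representations on order $m$ hyper-partitions, which will render the Schur positivity transparent without needing to cite a black-box plethystic fact.
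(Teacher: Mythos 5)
Your proposal is correct and follows essentially the same route as the paper: induction on $m$ with base case $B_n^{(0)} = h_n = s_{(n)}$, then applying Lemma \ref{recurrence lemma} together with the standard facts that Schur positivity is preserved under plethysm into $h_k$, products, and sums. The paper's proof invokes exactly these ingredients, so no further comparison is needed.
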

\begin{proof}
    The proof follows by induction on the order $m \geq 0.$ For $m =0,$ $B_n^{(0)} = h_n=s_{(n)}$ is clearly Schur positive. Now assume for some $m \geq 0$ that every $B_n^{(m)}$ is Schur positive. Using Lemma \ref{recurrence lemma}, we see that for all $n \geq 0$
    $$B_n^{(m+1)} = \sum_{\lambda \vdash n} \prod_{j \geq 1} h_{m_j(\lambda)}(B_{j}^{(m)}).$$ By assumption, each $B_{n}^{(m)}$ is Schur positive so that the plethysms $h_{m_j(\lambda)}(B_{n}^{(m)})$ are also Schur positive and thus the products $\prod_{j \geq 1} h_{m_j(\lambda)}(B_{j}^{(m)})$ are Schur positive. Thus each $B_n^{(m+1)}$ is Schur positive as required.
\end{proof}

\begin{example}

Below is a table showing the Schur expansions of the first few Bell functions:

\begin{center}
\begin{tabular}{ | m{1.5cm} | m{1.5cm}| m{1.5cm} | m{3cm} | m{5cm} | } 
  \hline 
  $B_n^{(m)}$ & $n=1$  & $n=2$  & $n =3$  & $n =4$ \\ 
  \hline
  $m = 0$ & $s_1$ & $s_2$ & $s_3$ & $s_4$  \\
  \hline
  $m=1$ & $s_1$ & $2s_2$ & $ s_{2,1} + 3s_3$ & $2s_{2,2} + 2s_{3,1} + 5s_4$  \\ 
  \hline
  $m=2$ & $s_1$ & $ 3s_2 $ & $3s_{2,1} + 6s_3$ & $s_{2,1,1} + 8s_{2,2} + 9s_{3,1} + 14s_{4}$ \\ 
  \hline
  $m=3$ & $s_1$ & $4s_2$ & $6s_{2,1} + 10s_3$ & $4s_{2,1,1} + 20s_{2,2} + 24s_{3,1} + 30s_{4}$ \\
  \hline
  $m=4$ & $s_1$ & $5s_2$ & $10s_{2,1} + 15 s_3$ & $10s_{2,1,1}+40s_{2,2}+50s_{3,1} + 55s_4$ \\
  \hline
\end{tabular}
\end{center}

\end{example}

\begin{remark}
    The Schur expansion coefficients of the Bell functions appear to satisfy monotonicity with respect to the \textbf{dominance ordering}. It would be interesting to see if this in fact holds in general. Furthermore, it is an interesting problem to find combinatorial formulas for these coefficients. From Proposition \ref{plethystic recursion} we see that if we knew combinatorial formulas for plethysm coefficients $s_{\lambda}(s_{\mu})$ (which we do not), then we could recursively determine the Schur expansions of the $B_n^{(m)}$ albeit in a roundabout manner. 
\end{remark}

\subsection{Combinatorial species and hyper-partitions}

Using the machinery of \textbf{combinatorial species}, we may identify specific representations whose Frobenius characteristics are the higher order Bell functions. Here we give an abridged introduction to combinatorial species. We refer the reader to Bergeron--Labelle--Leroux's book \cite{BLL98}.

\begin{defn}
    Define $\mathbb{B}$ to be the category whose objects are finite sets and whose morphisms are bijective functions. A  \textit{\textbf{species}} is a functor $\mathcal{F}: \mathbb{B} \rightarrow \mathbb{B}.$ For all $n \geq 0$ we use the shorthand $\mathcal{F}[n]:= \mathcal{F}(\{1,\ldots, n\}).$ Define the species of sets $\mathcal{S}$ as $\mathcal{S}(U) := \{U\}$ and for a bijection $f:U \rightarrow V,$ $\mathcal{S}(f)( \{U\}):= \{V\}.$ Define the species $\mathcal{S}_{+}$ of non-empty sets to agree with $\mathcal{S}$ for all finite non-empty sets and $\mathcal{S}_{+}(\emptyset) := \emptyset.$ Define the species of set partitions $\mathcal{P}$ by 
    $\mathcal{P}(U):= \{ \pi = \{W_1,\ldots,W_k\} ~|~ W_{i} \neq \emptyset,~ \sqcup_{W \in \pi} W = U\}$ and for a bijection $f:U \rightarrow V$ define $\mathcal{P}(f)( \{W_1,\ldots,W_k\}):= \{f(W_1),\ldots,f(W_k)\}.$
\end{defn}

 The main operation on species which we will be interested in is composition. 

\begin{defn}
    If $\mathcal{F},\mathcal{G}$ are species, then we define the composition species $\mathcal{F}\circ \mathcal{G}$ by 
    $$(\mathcal{F}\circ\mathcal{G})(U):= \bigcup_{\pi \in \mathcal{P}(U)} \left( \mathcal{F}(\pi) \times \prod_{W \in \pi} \mathcal{G}(W) \right)$$ and for a bijection $f:U \rightarrow V$ define $(\mathcal{F}\circ\mathcal{G})(f):(\mathcal{F}\circ\mathcal{G})(U) \rightarrow (\mathcal{F}\circ\mathcal{G})(V)$ using $\mathcal{F}(\mathcal{P}(f))$ and $\mathcal{G}(f|_{W})$ accordingly. 
\end{defn}

As an example, it is instructive to check that $\mathcal{P} = \mathcal{S}\circ \mathcal{S}_{+}.$ Every species defines a symmetric function in the following way:

\begin{defn}
    For a species $\mathcal{F}$ define the \textit{\textbf{cycle index series}} $Z_{\mathcal{F}} \in \Lambda$ as 
    $$Z_{\mathcal{F}}:= \sum_{\lambda \in \Par} \frac{\mathrm{fix}(\mathcal{F}[n],\lambda)}{z_{\lambda}} p_{\lambda}(X)$$ where $\mathrm{fix}(\mathcal{F}[n],\lambda)$ is the number of fixed points in $\mathcal{F}[n]$ of any permutation $\sigma \in \mathfrak{S}_n$ with cycle type $\lambda.$
\end{defn}

For all $n \geq 1$ the homogeneous degree $n$ component of $Z_{\mathcal{F}}$ is the Frobenius characteristic of the $\mathfrak{S}_n$ representation determined by the group action of $\mathfrak{S}_n$ on $\mathcal{F}[n].$ As an example, we have that $Z_{\mathcal{S}} = \Omega(X)$ and $Z_{\mathcal{S}_+} = \Omega_0(X).$

\begin{defn}
    For $m \geq 0$ define the species of order $m$ hyper-partitions $\mathcal{P}^{(m)}$ by $\mathcal{P}^{(0)}:= \mathcal{S}$ and $\mathcal{P}^{(m+1)} = \mathcal{P}^{(m)}\circ \mathcal{S}_{+}$ for all $m \geq 0.$ We will write $\pi^{(m)}_n:= \mathcal{P}^{(m)}[n]$ for the set of order $m$ \textit{\textbf{hyper-partitions}} of $\{1,\ldots,n\}.$
\end{defn}

\begin{lem}\cite{BLL98}\label{species comp lem}
    If $\mathcal{F}$ and $\mathcal{G}$ are combinatorial species with $\mathcal{G}(\emptyset) = \emptyset,$ then 
    $$Z_{\mathcal{F} \circ \mathcal{G}} = Z_\mathcal{F}(Z_\mathcal{G}).$$
\end{lem}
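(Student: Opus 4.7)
My plan is to prove the lemma by direct inspection of fixed points of a permutation on the composed species, matching them with the coefficients obtained from the plethystic substitution. Fix $\sigma \in \mathfrak{S}_n$ with cycle type $\nu$. By definition of $\mathcal{F} \circ \mathcal{G}$, an element of $(\mathcal{F}\circ\mathcal{G})[n]$ is a triple $(\pi, \phi, (\gamma_B)_{B \in \pi})$ with $\pi \in \mathcal{P}[n]$, $\phi \in \mathcal{F}(\pi)$, and $\gamma_B \in \mathcal{G}(B)$. For such a triple to be fixed by $\sigma$, three conditions must hold simultaneously: (i) $\sigma$ permutes the blocks of $\pi$; (ii) the induced bijection $\hat{\sigma}$ on $\pi$ fixes $\phi \in \mathcal{F}(\pi)$; and (iii) if $B, \sigma(B), \ldots, \sigma^{\ell-1}(B)$ is a $\sigma$-orbit of blocks of length $\ell$, then $\gamma_B$ determines the $\mathcal{G}$-structures on the other blocks via transport by powers of $\sigma$, and $\gamma_B$ must itself be fixed by the self-bijection $\sigma^\ell|_B: B \to B$.

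Next, I would analyze the cycle structure in detail. A cycle of $\sigma$ of length $k$ must lie entirely inside a single block-orbit; if that orbit has length $\ell$, then necessarily $\ell \mid k$, and the cycle splits across the $\ell$ blocks of the orbit so that $\sigma^\ell|_B$ acquires one cycle of length $k/\ell$ from it. Consequently, each $\sigma$-invariant partition is encoded by: (a) a partition of the cycles of $\sigma$ into groups, one group per block-orbit, together with (b) for each group a choice of block-orbit length $\ell$ dividing all cycle lengths in that group. The cycle type of $\hat{\sigma}$ on $\pi$ is then the multiset of these $\ell$'s, and for each block-orbit the cycle type of $\sigma^\ell|_B$ is obtained from the assigned $\sigma$-cycles by dividing each length by $\ell$.

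On the algebraic side, I would expand the right-hand side using the identity $p_k(Z_\mathcal{G}) = \sum_{\lambda} \frac{\mathrm{fix}(\mathcal{G}[|\lambda|],\lambda)}{z_\lambda} p_{k\cdot\lambda}$, where $k\cdot\lambda$ denotes the partition with parts $k\lambda_i$, so that
\[
Z_\mathcal{F}(Z_\mathcal{G}) = \sum_\mu \frac{\mathrm{fix}(\mathcal{F}[|\mu|],\mu)}{z_\mu} \prod_i \left( \sum_{\lambda^{(i)}} \frac{\mathrm{fix}(\mathcal{G}[|\lambda^{(i)}|],\lambda^{(i)})}{z_{\lambda^{(i)}}} p_{\mu_i \cdot \lambda^{(i)}} \right).
\]
The indexing $(\mu, (\lambda^{(i)}))$ matches precisely the data (a)--(b) above, with $\mu$ recording block-orbit lengths and $\lambda^{(i)}$ recording the divided cycle lengths within each orbit. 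Extracting the coefficient of $p_\nu$ on both sides, the combinatorial count of fixed points (grouped by $(\mu, (\lambda^{(i)}))$) matches the algebraic expansion, modulo bookkeeping of the $z$-factors and the choice of ordering of block-orbits of equal size.

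The main obstacle is the $z_\lambda$ bookkeeping: one must reconcile the unordered nature of block-orbits with the ordered product $\prod_i$ on the algebraic side, and verify that the factors of $z_\mu$, $z_{\lambda^{(i)}}$, and the number of $\sigma$-invariant partitions of the prescribed combinatorial type all combine to give the correct coefficient $\mathrm{fix}((\mathcal{F}\circ\mathcal{G})[n],\nu)/z_\nu$. Once this accounting is laid out carefully, the identity follows; since this is a standard formula from \cite{BLL98}, I would cite it rather than fully verify the bookkeeping by hand.
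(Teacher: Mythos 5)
The paper does not prove this lemma at all---it is stated with a citation to \cite{BLL98}---and your sketch is the standard fixed-point/plethysm argument from that reference, correctly outlining how $\sigma$-invariant structures on $\mathcal{F}\circ\mathcal{G}$ decompose into block-orbits of length $\ell$ dividing the lengths of the $\sigma$-cycles they absorb, matching the substitution $p_k \mapsto p_k(Z_{\mathcal{G}})$. Deferring the $z_\lambda$-bookkeeping to the citation is consistent with (indeed more generous than) the paper's own treatment, so this is acceptable.
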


The main result in this section is the following:

\begin{thm}\label{Frob thm}
    For all $n,m \geq 0$, $B_n^{(m)}(X)$ is the Frobenius characteristic of the $\mathfrak{S}_n$ action on the set $\pi_n^{(m)}$ of order $m$ hyper-partitions of $\{1,\ldots, n\}.$
\end{thm}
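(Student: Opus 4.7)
The plan is to reduce the theorem to a single identity of symmetric functions, namely
$$Z_{\mathcal{P}^{(m)}}(X) \;=\; \Omega\!\left(\Omega_0^{(m)}(X)\right) \quad \text{for all } m \geq 0,$$
and to prove this identity by induction on $m$. Once it is established, the theorem follows by extracting the degree-$n$ homogeneous component on each side: on the left this component is, by the very definition of the cycle index series, the Frobenius characteristic of the $\mathfrak{S}_n$ action on $\mathcal{P}^{(m)}[n] = \pi_n^{(m)}$, while on the right it is $B_n^{(m)}(X)$ by the defining expansion $\Omega(\Omega_0^{(m)}(tX)) = \sum_{n \geq 0} B_n^{(m)}(X)\, t^n$ (the variable $t$ is simply a degree-tracking device, and substituting $tX$ scales each homogeneous piece of degree $n$ by $t^n$).

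For the induction, the base case $m = 0$ reads $Z_{\mathcal{S}}(X) = \Omega(h_1(X))$, which is immediate from the paper's observation $Z_{\mathcal{S}} = \Omega$ together with the fact that $h_1 = p_1$ is the identity for plethystic substitution. For the inductive step, I would invoke the species recursion $\mathcal{P}^{(m+1)} = \mathcal{P}^{(m)} \circ \mathcal{S}_+$ and Lemma \ref{species comp lem}, whose hypothesis $\mathcal{S}_+(\emptyset) = \emptyset$ holds by definition, to obtain
$$Z_{\mathcal{P}^{(m+1)}} \;=\; Z_{\mathcal{P}^{(m)}}\!\left(Z_{\mathcal{S}_+}\right) \;=\; Z_{\mathcal{P}^{(m)}}\!\left(\Omega_0(X)\right).$$
Applying the inductive hypothesis and associativity of plethysm then gives $Z_{\mathcal{P}^{(m+1)}} = \Omega\!\left(\Omega_0^{(m)}(\Omega_0(X))\right)$, which I would finally identify with $\Omega(\Omega_0^{(m+1)}(X))$ to close the induction.

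The only nontrivial bookkeeping — and hence the closest thing to a genuine obstacle — is that the defining recursion $\Omega_0^{(m+1)} := \Omega_0(\Omega_0^{(m)})$ grafts each new copy of $\Omega_0$ onto the \emph{outside}, whereas the species composition naturally grafts $\Omega_0$ onto the \emph{inside}. To reconcile these I would prove once, by a short separate induction (or by direct appeal to associativity of plethysm applied to an iterated self-composition of identical factors), the elementary identity $\Omega_0^{(m)}(\Omega_0(X)) = \Omega_0^{(m+1)}(X)$. After that, the proof is essentially a package of the cycle-index machinery of Lemma \ref{species comp lem} with the explicit plethystic definition of the Bell functions, and no further combinatorial input is required.
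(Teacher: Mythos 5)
Your proposal is correct and follows essentially the same route as the paper: an induction on $m$ showing $Z_{\mathcal{P}^{(m)}} = \Omega(\Omega_0^{(m)}(X))$ via the cycle-index composition lemma, then reading off homogeneous components. The one detail you add --- reconciling the inside-grafting species recursion $\mathcal{P}^{(m+1)} = \mathcal{P}^{(m)}\circ\mathcal{S}_+$ with the outside-grafting plethystic recursion $\Omega_0^{(m+1)} = \Omega_0(\Omega_0^{(m)})$ via the auxiliary identity $\Omega_0^{(m)}(\Omega_0(X)) = \Omega_0^{(m+1)}(X)$ --- is a genuine point that the paper's one-line proof leaves implicit, and your resolution of it is correct.
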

\begin{proof}
    By induction using Lemma \ref{species comp lem}, we see that $\Omega( \Omega^{(m)}_0(X))$ is the cycle index series for the species $\mathcal{P}^{(m)}$. The result follows.
\end{proof}

\subsection{Monomial expansion}

We will provide a recursive formula for the coefficients in the monomial expansions of the Bell functions. 

\begin{defn}
    Define $Q$ as the subset of $\mathbb{Z}_{\geq 0}^{\infty}$ consisting of finite support sequences. Let $Q' := Q \setminus \{0\}.$ We consider $\mathbb{Y} \subset Q$ as the set of weakly decreasing vectors. Define $\rho^{(m)}(\lambda) \in \mathbb{Q}$ by
    $B^{(m)}_n = \sum_{\lambda \vdash n} \rho^{(m)}(\lambda) m_{\lambda}.$ Extend $\rho^{(m)}$ to $\alpha \in Q$ by setting $\rho^{(m)}(\alpha):= \rho^{(m)}(\lambda)$ where $\lambda$ is the weakly decreasing rearrangement of $\alpha.$
\end{defn}

Note that since $B^{(0)}_n = h_n$ for all $n \geq 0,$ we know that $\rho^{(0)}(\lambda) = 1$ for all $\lambda \in \Par.$ The numbers $\{ \rho^{(m)}(\lambda) \}_{m\geq 0, \lambda \in \Par}$ satisfy the following recursion:

\begin{prop}\label{monomial recursion prop} For all $m \geq 0$ and $\lambda \in \Par \setminus \{ \emptyset \},$
    $$\rho^{(m+1)}(\lambda) = \sum_{\substack{\phi:Q'\rightarrow \mathbb{Z}_{\geq 0} \\ \sum_{\alpha \in Q'} \phi(\alpha)\alpha = \lambda}} \prod_{\alpha \in Q'} \binom{\phi(\alpha) + \rho^{(m)}(\alpha)-1}{\rho^{(m)}(\alpha)-1}.$$
\end{prop}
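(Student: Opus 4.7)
The approach is a direct generating-function calculation. I will evaluate $\Omega(\Omega_0^{(m+1)}(tX))$ as an infinite product of factors indexed by the composition vectors $\alpha \in Q'$, expand each factor via the generalized negative binomial series, and read off the coefficient of $t^{|\lambda|} x^\lambda$. The key input is the intermediate identity $\Omega_0^{(m+1)}(tX) = \sum_{n \geq 1} B_n^{(m)}(tX)$ already obtained in the proof of Lemma \ref{recurrence lemma}.

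First, I would substitute the monomial expansion
$$B_n^{(m)}(X) = \sum_{\alpha \in Q,\, |\alpha| = n} \rho^{(m)}(\alpha)\, x^\alpha,$$
which follows at once from $m_\lambda = \sum_{\mathrm{sort}(\alpha) = \lambda} x^\alpha$ and the definition of $\rho^{(m)}$ on $Q$. This rewrites the argument as $\Omega_0^{(m+1)}(tX) = \sum_{\alpha \in Q'} \rho^{(m)}(\alpha)\, t^{|\alpha|} x^\alpha$. Applying $\Omega$ to both sides and using multiplicativity together with the single-monomial identity $\Omega(c\, y) = (1-y)^{-c}$ yields
$$\Omega\!\left(\Omega_0^{(m+1)}(tX)\right) = \prod_{\alpha \in Q'} \bigl(1 - t^{|\alpha|} x^\alpha\bigr)^{-\rho^{(m)}(\alpha)}.$$

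Next, I would expand each factor by the negative binomial identity $(1-y)^{-r} = \sum_{k \geq 0} \binom{k + r - 1}{r-1} y^k$. Selecting one term from each factor corresponds to choosing a finitely supported function $\phi: Q' \to \mathbb{Z}_{\geq 0}$ with $\phi(\alpha) = k$, producing
$$\Omega\!\left(\Omega_0^{(m+1)}(tX)\right) = \sum_\phi \left( \prod_{\alpha \in Q'} \binom{\phi(\alpha) + \rho^{(m)}(\alpha) - 1}{\rho^{(m)}(\alpha) - 1} \right) t^{\sum_\alpha \phi(\alpha)|\alpha|}\, x^{\sum_\alpha \phi(\alpha)\alpha}.$$
Comparing with $\sum_n t^n B_n^{(m+1)}(X) = \sum_{\beta \in Q} \rho^{(m+1)}(\beta)\, t^{|\beta|} x^\beta$ and matching the coefficient of $t^{|\lambda|} x^\lambda$ for $\lambda \in \mathbb{Y} \setminus \{\emptyset\}$ yields exactly the claimed recursion.

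There is no substantial obstacle; the proof is essentially careful bookkeeping. The only subtle point is the interpretation of the binomial coefficient when $\rho^{(m)}(\alpha) = 0$, but this case does not actually arise: Schur positivity together with the fact that the trivial $\mathfrak{S}_n$-representation always embeds in the permutation representation on $\pi_n^{(m)}$ (via the coarse hyper-partition containing all of $\{1,\ldots,n\}$) forces $\rho^{(m)}(\alpha) \geq 1$ for every $\alpha \in Q'$.
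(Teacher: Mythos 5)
Your proof is correct and takes essentially the same route as the paper's: expand $\Omega_0^{(m+1)}$ in monomials, write $\Omega$ of the result as the product $\prod_{\alpha\in Q'}(1-x^{\alpha})^{-\rho^{(m)}(\alpha)}$, expand each factor by the negative binomial series, and match coefficients of $x^{\lambda}$. Your closing observation that $\rho^{(m)}(\alpha)\geq 1$ (so the binomial coefficients are unambiguous) is a small point the paper leaves implicit, but the argument is otherwise identical.
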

\begin{proof}
By direct computation,
$$\Omega\left( \Omega^{(m+1)}_0(X) \right)=\Omega\left( \sum_{n \geq 1} B_n^{(m)}(X) \right)= \Omega\left( \sum_{\alpha \in Q'} \rho^{(m)}(\alpha) x^{\alpha} \right)= \prod_{\alpha \in  Q'} \left( 1-x^{\alpha} \right)^{-\rho^{(m)}(\alpha)}.$$
    
Now we use the standard power series identity 
$(1-t)^{-N} = \sum_{n \geq 0} \binom{n+N-1}{N-1} t^n$ to see that
        \begin{align*}
        & \prod_{\alpha \in  Q'} \left( 1-x^{\alpha} \right)^{-\rho^{(m)}(\alpha)} \\
        &= \prod_{\alpha \in  Q'} \left( \sum_{n \geq 0} \binom{n+\rho^{(m)}(\alpha)-1}{\rho^{(m)}(\alpha)-1}  x^{n \alpha}\right) \\
        &= 1+ \sum_{\substack{\phi: Q' \rightarrow \mathbb{Z}_{\geq 0} \\ \phi(\alpha) = 0 ~\text{a.e.}}} x^{\sum_{\alpha \in Q' } \phi(
        \alpha)\alpha} \prod_{\alpha\in Q'} \binom{\phi(\alpha)+\rho^{(m)}(\alpha)-1}{\rho^{(m)}(\alpha)-1} \\
        &= 1 + \sum_{\beta \in Q'} \left( \sum_{\substack{\phi: Q' \rightarrow \mathbb{Z}_{\geq 0} \\ \sum_{\alpha \in Q' } \phi(\alpha) \alpha = \beta}} \prod_{\alpha\in Q'} \binom{\phi(\alpha)+\rho^{(m)}(\alpha)-1}{\rho^{(m)}(\alpha)-1} \right) x^{\beta} \\
        &= m_{\emptyset}(X) + \sum_{\lambda \in \Par \setminus \{\emptyset \}} \left( \sum_{\substack{\phi: Q' \rightarrow \mathbb{Z}_{\geq 0} \\ \sum_{\alpha \in Q' } \phi(\alpha) \alpha = \lambda}} \prod_{\alpha\in Q'} \binom{\phi(\alpha)+\rho^{(m)}(\alpha)-1}{\rho^{(m)}(\alpha)-1} \right) m_{\lambda}(X).\\
    \end{align*}
\end{proof}

We may interpret the values $\rho^{(m)}(\lambda)$ in terms of hyper-partitions using the following result. 

\begin{thm}
    Suppose $M$ is a finite set on which $\mathfrak{S}_n$ acts by permutations. Then the Frobenius characteristic $\mathrm{Frob}_{\mathfrak{S}_n}(\mathbb{C}[M])$ has the monomial expansion 
    $$\mathrm{Frob}_{\mathfrak{S}_n}(\mathbb{C}[M]) = \sum_{\lambda \vdash n} |M/\mathfrak{S}_{\lambda}| m_{\lambda}(X)$$ where $M/\mathfrak{S}_{\lambda}$ denotes the set of orbits in $M$ of the Young subgroup $\mathfrak{S}_{\lambda} = \mathfrak{S}_{\lambda_1}\times \ldots \times \mathfrak{S}_{\lambda_{\ell}}.$
\end{thm}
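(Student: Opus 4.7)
The plan is to compute the coefficient $c_\lambda$ in the expansion $\mathrm{Frob}_{\mathfrak{S}_n}(\mathbb{C}[M]) = \sum_\lambda c_\lambda m_\lambda$ by exploiting the duality $\langle h_\mu, m_\nu \rangle = \delta_{\mu,\nu}$ of the complete homogeneous and monomial bases under the Hall inner product, which gives $c_\lambda = \langle \mathrm{Frob}_{\mathfrak{S}_n}(\mathbb{C}[M]), h_\lambda \rangle$. I would then identify this inner product with $|M/\mathfrak{S}_\lambda|$ via the classical representation-theoretic interpretation of $h_\lambda$ together with Frobenius reciprocity.

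The key input is the well-known fact that $h_\lambda = \mathrm{Frob}_{\mathfrak{S}_n}\bigl(\mathrm{Ind}_{\mathfrak{S}_\lambda}^{\mathfrak{S}_n}\mathbf{1}\bigr)$, where $\mathfrak{S}_\lambda = \mathfrak{S}_{\lambda_1} \times \cdots \times \mathfrak{S}_{\lambda_\ell}$ is the Young subgroup. This reduces to $h_n = \mathrm{Frob}_{\mathfrak{S}_n}(\mathbf{1})$, which follows immediately from the expansion $h_n = \sum_{\mu \vdash n} p_\mu/z_\mu$ and the definition of $\mathrm{Frob}$, combined with the compatibility of $\mathrm{Frob}$ with induction products on the Grothendieck ring $\bigoplus_n R(\mathfrak{S}_n)$. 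Since $\mathrm{Frob}$ is an isometry from the Grothendieck ring (with the usual character inner product) to $\Lambda'$ (with the Hall inner product), Frobenius reciprocity then yields
\[
c_\lambda \;=\; \langle \mathrm{Frob}_{\mathfrak{S}_n}(\mathbb{C}[M]), h_\lambda \rangle \;=\; \bigl\langle \mathbb{C}[M], \mathrm{Ind}_{\mathfrak{S}_\lambda}^{\mathfrak{S}_n}\mathbf{1} \bigr\rangle_{\mathfrak{S}_n} \;=\; \bigl\langle \mathrm{Res}_{\mathfrak{S}_\lambda}^{\mathfrak{S}_n}\mathbb{C}[M], \mathbf{1} \bigr\rangle_{\mathfrak{S}_\lambda} \;=\; \dim (\mathbb{C}[M])^{\mathfrak{S}_\lambda},
\]
where the final equality uses the standard identity $\langle V, \mathbf{1}\rangle_G = \dim V^G$.

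To finish, since $\mathbb{C}[M]$ is a permutation module under the restricted $\mathfrak{S}_\lambda$-action, the invariant subspace $(\mathbb{C}[M])^{\mathfrak{S}_\lambda}$ has a canonical basis consisting of the orbit sums $\sum_{x \in \mathcal{O}}[x]$ indexed by $\mathfrak{S}_\lambda$-orbits $\mathcal{O} \subseteq M$, so its dimension equals $|M/\mathfrak{S}_\lambda|$ as required. I do not anticipate a serious obstacle here; the whole argument is standard representation-theoretic bookkeeping, and the only delicate point is matching the paper's normalization of the Hall inner product with the standard convention so that the duality $\langle h_\mu, m_\nu \rangle = \delta_{\mu,\nu}$ is available to read off the monomial coefficients.
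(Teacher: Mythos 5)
Your argument is correct and is precisely the one the paper gestures at: the paper's proof is only a one-line appeal to Frobenius reciprocity between $\mathfrak{S}_n$ and the Young subgroups $\mathfrak{S}_{\lambda}$ (with a reference), and your proposal fills in exactly those standard details — duality of $h_\mu$ and $m_\nu$, the identification $h_\lambda = \mathrm{Frob}(\mathrm{Ind}_{\mathfrak{S}_\lambda}^{\mathfrak{S}_n}\mathbf{1})$, and counting orbit sums as a basis of the invariants. The paper's normalization $\langle p_\lambda, p_\mu\rangle = z_\lambda\delta_{\lambda,\mu}$ is the standard one, so the duality you rely on holds without adjustment.
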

\begin{proof}
    This follows from \textbf{Frobenius reciprocity} between $\mathfrak{S}_n$ and the subgroups $\mathfrak{S}_{\lambda}.$ We refer the reader to Dotsenko's article \cite{Dotsenko}.
\end{proof}

As an immediate consequence, we find that the numbers $\rho^{(m)}(\lambda)$ have the combinatorial interpretation 
$$\rho^{(m)}(\lambda) = | \pi_{|\lambda|}^{(m)}/\mathfrak{S}_{\lambda}|.$$ This implies that the recursion Proposition \ref{monomial recursion prop} should be combinatorially meaningful. It would be interesting to give a fully combinatorial description of this formula.

\subsection{Power sum expansion}

Here we will find a recursive formula for the power sum expansion coefficients of the Bell functions $B_n^{(m)}.$ It will be convenient to introduce the following notation.

\begin{defn}
    For $\lambda, \mu^{(1)},\ldots,\mu^{(n)} \in \Par$ and $\lambda = \mu^{(1)}\cup\cdots\cup \mu^{(n)} $ define 
    $ \binom{\lambda}{\mu^{(1)},\ldots,\mu^{(n)}}:= \prod_{j \geq 1} \frac{m_j(\lambda)!}{m_j(\mu^{(1)})!\cdots m_j(\mu^{(n)})!}.$ Define $\beta^{(m)}(\lambda) \in \mathbb{Q}$ as 
    $B_n^{(m)} = \sum_{\lambda \vdash n} \frac{\beta^{(m)}(\lambda)}{z_{\lambda}} p_{\lambda}.$
\end{defn}

Note that by Theorem \ref{Frob thm} we know that for all $\lambda \neq \emptyset,$
$\beta^{(m)}(\lambda) = \mathrm{fix}(\pi_{|\lambda|}^{(m)},\lambda)$, i.e., the number of order $m$ hyper-partitions of $\{1,\ldots,|\lambda|\}$ fixed by any permutation with cycle type $\lambda.$ In particular, $\beta^{(m)}(\lambda) \in \mathbb{Z}_{\geq 0}.$ We will now prove the following recursive formula:

\begin{prop}\label{power sum recursion prop}
For all $m \geq 0$ and $\lambda \in \Par \setminus \{\emptyset \},$
    $$\beta^{(m+1)}(\lambda) = \sum_{n \geq 1} \frac{1}{n!} \sum_{\substack{\mu^{(1)}\cup\cdots \cup \mu^{(n)} = \lambda \\ \mu^{(i)} \neq \emptyset}} \binom{\lambda}{\mu^{(1)},\ldots,\mu^{(n)}}\prod_{1\leq i\leq n } \left( \sum_{d\mid \gcd(\mu^{(i)})} d^{\ell(\mu^{(i)})-1} \beta^{(m)}(\mu^{(i)}/d) \right).$$
\end{prop}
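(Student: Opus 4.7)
The plan is to mimic the algebraic computation of Proposition \ref{monomial recursion prop} but work in the power sum basis. From the computation in the proof of Lemma \ref{recurrence lemma},
$$\Omega\!\left(\Omega_0^{(m+1)}(tX)\right) = \Omega(A), \qquad A := \sum_{n \geq 1} B_n^{(m)}(X)\, t^n,$$
so by the identity $\Omega(A) = \exp\!\left(\sum_{k \geq 1} p_k(A)/k\right)$ from Lemma \ref{plethystic exp lem}(1), it suffices to expand the exponent in the $p$-basis, exponentiate term by term, and read off the coefficient of $p_\lambda t^{|\lambda|}$ on each side of $B_{|\lambda|}^{(m+1)} = \sum_{\lambda} \frac{\beta^{(m+1)}(\lambda)}{z_\lambda} p_\lambda$.

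First, I would expand $B_n^{(m)}(X) = \sum_{\mu \vdash n} \frac{\beta^{(m)}(\mu)}{z_\mu} p_\mu(X)$ and apply $p_k$ plethystically, using $p_k(p_j) = p_{jk}$ and $p_k(t) = t^k$, to get $p_k\!\left(B_n^{(m)}(X) t^n\right) = \sum_{\mu \vdash n} \frac{\beta^{(m)}(\mu)}{z_\mu}\, p_{k\mu}(X)\, t^{kn}$. Then I would reindex the triple sum $\sum_k \sum_n \sum_\mu$ by setting $\nu := k\mu$, so that $k$ runs over divisors of $\gcd(\nu)$ and $\mu = \nu/k$. Combined with the scaling identity $z_\nu = k^{\ell(\nu)} z_{\nu/k}$ (which follows from $m_{kj'}(\nu) = m_{j'}(\nu/k)$ for each $j' \geq 1$ and $\ell(\nu) = \ell(\nu/k)$), this collects the exponent as
$$\sum_{k \geq 1} \frac{p_k(A)}{k} = \sum_{\nu \neq \emptyset} \frac{\gamma^{(m)}(\nu)}{z_\nu}\, p_\nu(X)\, t^{|\nu|}, \qquad \gamma^{(m)}(\nu) := \sum_{d \mid \gcd(\nu)} d^{\ell(\nu)-1}\, \beta^{(m)}(\nu/d),$$
which is exactly the inner divisor-sum factor appearing on the right-hand side of the target formula.

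Next, I would exponentiate. Expanding
$$\exp\!\left(\sum_{\nu \neq \emptyset} \frac{\gamma^{(m)}(\nu)}{z_\nu}\, p_\nu\, t^{|\nu|}\right) = \sum_{n \geq 0} \frac{1}{n!}\left(\sum_{\nu \neq \emptyset} \frac{\gamma^{(m)}(\nu)}{z_\nu}\, p_\nu\, t^{|\nu|}\right)^{\!n}$$
and multiplying out the $n$-th power, using $p_{\mu^{(1)}}\cdots p_{\mu^{(n)}} = p_{\mu^{(1)} \cup \cdots \cup \mu^{(n)}}$, the coefficient of $p_\lambda\, t^{|\lambda|}$ for $\lambda \neq \emptyset$ becomes
$$\sum_{n \geq 1} \frac{1}{n!} \sum_{\substack{\mu^{(1)} \cup \cdots \cup \mu^{(n)} = \lambda \\ \mu^{(i)} \neq \emptyset}} \prod_{i=1}^n \frac{\gamma^{(m)}(\mu^{(i)})}{z_{\mu^{(i)}}}.$$
Equating with $\beta^{(m+1)}(\lambda)/z_\lambda$ and clearing denominators via the elementary identity $z_\lambda / \prod_i z_{\mu^{(i)}} = \binom{\lambda}{\mu^{(1)},\ldots,\mu^{(n)}}$, which holds whenever $\lambda = \bigcup_i \mu^{(i)}$ since $m_j(\lambda) = \sum_i m_j(\mu^{(i)})$ cancels the $j^{m_j}$ factors, produces exactly the claimed recursion after substituting the definition of $\gamma^{(m)}$.

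The main bookkeeping hurdle is the reindexing $\nu = k\mu$ combined with the scaling $z_\nu = k^{\ell(\nu)} z_{\nu/k}$: this is what collapses the double sum over $(k,\mu)$ into a single sum over $\nu$ and manufactures the divisor-sum $\gamma^{(m)}(\nu)$ explicitly. Once that identification is in hand, the remaining steps are standard exponential-generating-function manipulation in the $p$-basis.
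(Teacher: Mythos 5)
Your proposal is correct and follows essentially the same route as the paper's proof: expand the argument of $\Omega$ in the power sum basis, use $\Omega = \exp\bigl(\sum_k p_k/k\bigr)$, reindex the divisor sum via $z_{\nu/d} = z_\nu/d^{\ell(\nu)}$ to produce the inner factor $\sum_{d\mid\gcd(\nu)} d^{\ell(\nu)-1}\beta^{(m)}(\nu/d)$, then expand the exponential and identify $z_\lambda/\prod_i z_{\mu^{(i)}}$ with the multinomial coefficient. The only cosmetic difference is that you carry the grading variable $t$ explicitly while the paper reads off homogeneous components directly.
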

\begin{proof}
We proceed by induction:
    \begin{align*}
        & \Omega\left( \Omega_0^{(m+1)}(X)  \right) \\
        &=\Omega \left( \sum_{\mu \neq \emptyset} \frac{\beta^{(m)}(\mu)}{z_{\mu}} p_{\mu}(X) \right) \\
        &= \exp\left( \sum_{n \geq 1} \frac{1}{n} p_n \left( \sum_{\mu\neq \emptyset} \frac{\beta^{(m)}(\mu)}{z_{\mu}} p_{\mu} \right) \right)  \\
        &= \exp \left( \sum_{\mu \neq \emptyset} \left( \sum_{d \mid \gcd(\mu)} \frac{\beta^{(m)}(\mu/d)}{dz_{\mu/d}} \right) p_{\mu} \right)\\
    \end{align*}

    It may be checked directly that for all $\mu \in \Par$ with $d \mid \gcd(\mu),$
    $z_{\mu/d} = \frac{z_{\mu}}{d^{\ell(\mu)}}.$ Thus,
    \begin{align*}
    & \exp \left( \sum_{\mu \neq \emptyset} \left( \sum_{d \mid \gcd(\mu)} \frac{\beta^{(m)}(\mu/d)}{dz_{\mu/d}} \right) p_{\mu} \right) \\
        &= \exp \left( \sum_{\mu \neq \emptyset} \left( \sum_{d \mid \gcd(\mu)}  d^{\ell(\mu)-1}\beta^{(m)}(\mu/d)\right) \frac{p_{\mu}}{z_{\mu}} \right) \\
        &= \sum_{n \geq 0} \frac{1}{n!} \left( \sum_{\mu \neq \emptyset} \left( \sum_{d \mid \gcd(\mu)}  d^{\ell(\mu)-1}\beta^{(m)}(\mu/d)\right) \frac{p_{\mu}}{z_{\mu}} \right)^n\\
        &= 1 + \sum_{n \geq 1} \frac{1}{n!} \sum_{\mu^{(1)},\ldots,\mu^{(n)}\neq \emptyset } \frac{p_{\mu^{(1)}\cup\cdots \cup \mu^{(n)}}}{z_{\mu^{(1)}}\cdots z_{\mu^{(n)}}} \prod_{1\leq i\leq n } \left( \sum_{d\mid \gcd(\mu^{(i)})} d^{\ell(\mu^{(i)})-1} \beta^{(m)}(\mu^{(i)}/d) \right) \\
    \end{align*}

    Lastly, it may be checked that  For $\lambda, \mu^{(1)},\ldots,\mu^{(n)} \in \Par$ with $\lambda = \mu^{(1)}\cup\cdots\cup \mu^{(n)} $, 
    $\binom{\lambda}{\mu^{(1)},\ldots,\mu^{(n)}} = \frac{z_{\lambda}}{z_{\mu^{(1)}}\cdots z_{\mu^{(n)}}}.$
    Therefore, we arrive at
    $$ 1+ \sum_{\lambda \neq \emptyset } \left( \sum_{n \geq 1} \frac{1}{n!} \sum_{\substack{\mu^{(1)}\cup \cdots \cup\mu^{(n)} = \lambda \\ \mu^{(i)}\neq \emptyset}}\binom{\lambda}{\mu^{(1)},\ldots,\mu^{(n)}} \prod_{1\leq i\leq n } \left( \sum_{d\mid \gcd(\mu^{(i)})} d^{\ell(\mu^{(i)})-1} \beta^{(m)}(\mu^{(i)}/d) \right)  \right) \frac{p_{\lambda}}{z_{\lambda}}.$$
\end{proof}

\begin{remark}

Interestingly, since the numbers $\beta^{(m)}(\lambda)$ are integers, the denominators $\frac{1}{n!}$ in the recursion in Proposition \ref{power sum recursion prop} must cancel. Clearing the denominators gives 
$$\ell(\lambda)!\beta^{(m+1)}(\lambda) = \sum_{1\leq n \leq \ell(\lambda)} \frac{\ell(\lambda)!}{n!}\sum_{\substack{\mu^{(1)}\cup\cdots \cup \mu^{(n)} = \lambda \\ \mu^{(i)} \neq \emptyset}} \binom{\lambda}{\mu^{(1)},\ldots,\mu^{(n)}}\prod_{1\leq i\leq n } \left( \sum_{d\mid \gcd(\mu^{(i)})} d^{\ell(\mu^{(i)})-1} \beta^{(m)}(\mu^{(i)}/d) \right)$$ where both sides of the equation are non-negative integers. It would be interesting to give a combinatorial interpretation for this recursion. 

\end{remark}

\section{Bell symmetric functions}

In this section, we focus on the $m = 1$ case of the higher order Bell symmetric functions. We will simplify our notation in this case and write $B_n:= B^{(1)}_n$. 

\subsection{Some expansions}

\begin{example}
    Here are a few of the Bell symmetric functions:
    \begin{enumerate}
        \item $B_0  = s_{\emptyset}$
        \item $B_1  = s_{1}$
        \item $B_2  = 2s_2$
        \item $B_3 = 3s_3 + s_{2,1}$
        \item $B_4  = 5s_{4} + 2s_{2, 2} + 2s_{3, 1}$
        \item $B_5 = 7s_{5} + 5s_{4,1} + 4s_{3,2} + s_{2,2,1}$
    \end{enumerate}
    Note that $B_5 = 2h_5 + 2h_{4,1} + 3h_{3,2}-h_{3,1,1} + h_{2,2,1}$ is not $h$-positive.
\end{example}

Using Proposition \ref{plethystic recursion}, we see the following:

\begin{cor}
For all $n\geq 0,$
    $$B_n = \sum_{\lambda \vdash n} \prod_{j \geq 1} h_{m_j(\lambda)}(h_j)$$
\end{cor}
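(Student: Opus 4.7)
The plan is to obtain this as an immediate specialization of Lemma \ref{recurrence lemma}, which already provides the full recursive formula
$$B_n^{(m+1)} = \sum_{\lambda \vdash n} \prod_{j \geq 1} h_{m_j(\lambda)}(B_j^{(m)})$$
for every order $m \geq 0$. Setting $m = 0$ and invoking the initial condition $B_j^{(0)} = h_j$ (observed right after the definition of the higher order Bell functions), the right-hand side becomes $\sum_{\lambda \vdash n} \prod_{j \geq 1} h_{m_j(\lambda)}(h_j)$, while the left-hand side is $B_n^{(1)} = B_n$. This yields the claimed formula.

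To present this cleanly I would first recall the two ingredients: the identity $B_j^{(0)} = h_j$, which comes directly from the definition $\Omega_0^{(0)}(X) = h_1(X)$ and the generating function $\Omega(h_1(tX)) = \Omega(tX) = \sum_{n\geq 0} h_n(X) t^n$, and the recursion from Lemma \ref{recurrence lemma}. Then a single line of substitution gives the conclusion. Since each factor $h_{m_j(\lambda)}(h_j)$ is a plethysm of Schur-positive symmetric functions, the formula also provides a second, self-contained verification of the Schur positivity of $B_n$, though this is not what the corollary asserts.

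There is no real obstacle here beyond bookkeeping; the only minor point worth stating explicitly is that the plethysms $h_{m_j(\lambda)}(h_j)$ are well-defined because $h_j \in \Lambda_0$ for $j \geq 1$, so each substitution lies in the allowed range $\Lambda' \times \Lambda_0 \to \Lambda$ mentioned after the definition of plethystic substitution. Thus the corollary reduces to a one-step instance of the general lemma.
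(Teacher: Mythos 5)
Your proposal is correct and matches the paper's route exactly: the corollary is obtained by specializing the recursion of Lemma \ref{recurrence lemma} to $m=0$ and using $B_j^{(0)}=h_j$, which is precisely how the paper derives it (the paper states it without further proof as an immediate consequence). The extra remarks on well-definedness of the plethysms and on Schur positivity are fine but not needed.
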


Next we will give the monomial and power sum expansions of the Bell functions $B_n.$

\begin{defn}
     Given $\lambda \in \mathbb{Y}$ define $\rho(\lambda)$ as the number of partitions of $\lambda$ in $Q$, that is, 
    the number of multisets $\{\alpha_1,\ldots , \alpha_m\}$ of $\alpha_j \in Q'$ such that $\lambda = \alpha_1+\ldots + \alpha_m.$
\end{defn}

Using Proposition \ref{monomial recursion prop}, we find that $\rho(\lambda) = \rho^{(1)}(\lambda).$ Thus as a result:

\begin{cor}
    $$B_n = \sum_{\lambda \vdash n} \rho(\lambda) m_{\lambda}$$
\end{cor}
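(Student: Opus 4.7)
The plan is to specialize Proposition \ref{monomial recursion prop} to the case $m=0$ and then reinterpret the resulting sum combinatorially.

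First, I would recall that $B_n^{(0)} = h_n = \sum_{\lambda \vdash n} m_\lambda$, so that $\rho^{(0)}(\lambda) = 1$ for every $\lambda \in \Par$, and by the definition of the extension of $\rho^{(m)}$ to $Q$, also $\rho^{(0)}(\alpha) = 1$ for every $\alpha \in Q'$. Feeding this into the recursion of Proposition \ref{monomial recursion prop}, each binomial coefficient becomes
$$\binom{\phi(\alpha) + \rho^{(0)}(\alpha) - 1}{\rho^{(0)}(\alpha) - 1} = \binom{\phi(\alpha)}{0} = 1,$$
and the product over $\alpha \in Q'$ equals $1$. Therefore
$$\rho^{(1)}(\lambda) = \#\left\{ \phi: Q' \to \mathbb{Z}_{\geq 0} \,\bigg|\, \phi \text{ finitely supported},\ \sum_{\alpha \in Q'} \phi(\alpha)\alpha = \lambda \right\}.$$

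Next, I would note that such a finitely supported function $\phi$ is exactly the same data as a finite multiset $\{\!\{\alpha_1,\ldots,\alpha_m\}\!\}$ of elements of $Q'$ (with $\alpha$ appearing with multiplicity $\phi(\alpha)$), and the condition $\sum_\alpha \phi(\alpha)\alpha = \lambda$ says that this multiset sums to $\lambda$. By the definition of $\rho(\lambda)$, this count is precisely $\rho(\lambda)$, giving $\rho^{(1)}(\lambda) = \rho(\lambda)$.

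Finally, unwinding the definition of $\rho^{(1)}$, we have $B_n = B_n^{(1)} = \sum_{\lambda \vdash n} \rho^{(1)}(\lambda) m_\lambda = \sum_{\lambda \vdash n} \rho(\lambda) m_\lambda$, which is the claim. There is no real obstacle here; the only substantive step is verifying that, at the base order $m=0$, the binomials in the recursion collapse so that the formula reduces to a pure counting statement matching the definition of $\rho(\lambda)$.
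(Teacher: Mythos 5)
Your proof is correct and follows the same route the paper intends: specialize Proposition \ref{monomial recursion prop} at $m=0$ using $\rho^{(0)}\equiv 1$, observe that the binomial coefficients collapse to $1$, and identify the resulting count of finitely supported $\phi:Q'\to\mathbb{Z}_{\geq 0}$ with $\sum_{\alpha}\phi(\alpha)\alpha=\lambda$ with the multiset definition of $\rho(\lambda)$. Nothing is missing.
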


Therefore, in particular, we find that for $\lambda \vdash n$ with $n \geq 1$, the number of $\mathfrak{S}_{\lambda}$ orbits in the set partitions of $\{1,\ldots, n\}$ is given by the number $\rho(\lambda)$ of vector partitions of $\lambda.$

\begin{example}
Below are the monomial expansions of the first few Bell symmetric functions:
    \begin{enumerate}
        \item $B_0 = m_{\emptyset} $
        \item $B_1 = m_{1}$
        \item $B_2 = 2m_{1,1} +2m_2$
        \item $B_3 = 5m_{1, 1, 1} + 4m_{2, 1} + 3m_{3}$
        \item $B_4 = 15m_{1, 1, 1, 1} + 11m_{2, 1, 1} + 9m_{2, 2} + 7m_{3, 1} + 5m_{4}$
        \item $B_5 = 52m_{1, 1, 1, 1, 1} + 36m_{2, 1, 1, 1} + 26m_{2, 2, 1} + 21m_{3, 1, 1} + 16m_{3, 2} + 12m_{4, 1} + 7m_{5}$
    \end{enumerate}
    Notice that the coefficient of $m_{1^n}$ in $B_n$ is exactly $b_n:=b_n^{(1)}$ whereas the coefficient of $m_n$ is the number of integer partitions of $n.$ This agrees with the formula $\rho(1^n) = b_n$ since the vector partitions of $(1,\ldots,1,0,\ldots)$ correspond exactly to set partitions of $\{1,\ldots,n\}$ and similarly $\rho( (n) ) = \#(\lambda \vdash n)$ since the vector partitions of $ (n,0,\ldots)$ correspond exactly to the integer partitions of $n.$
\end{example}

It will be useful to introduce the following notation.

\begin{defn}
$$\sigma(\mu^{(1)},\ldots , \mu^{(n)}):= \prod_{i =1}^{n} \sigma_{\ell(\mu^{(i)})-1}(\gcd(\mu^{(i)}))$$ where
    $\sigma_r(m):= \sum_{d\mid m} d^r.$
    For $\lambda \in \Par$ define $\beta(\emptyset):= 1$ and for $\lambda \neq \emptyset$
    $$\beta(\lambda):= \sum_{n \geq 1} \frac{1}{n!} \sum_{\substack{\mu^{(1)}\cup\cdots\cup \mu^{(n)} = \lambda  
 \\ \mu^{(i)} \neq \emptyset}} \sigma(\mu^{(1)},\ldots,\mu^{(n)})\binom{\lambda}{\mu^{(1)},\ldots,\mu^{(n)}}.$$
\end{defn}

From Proposition \ref{power sum recursion prop} we find:

\begin{cor}
For all $n \geq 1,$
    $$B_n = \sum_{\lambda \vdash n} \frac{\beta(\lambda)}{z_{\lambda}} p_{\lambda}.$$
\end{cor}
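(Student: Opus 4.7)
The plan is to deduce the corollary as a direct specialization of Proposition \ref{power sum recursion prop} to the case $m = 0$, combined with the well-known power sum expansion of the complete homogeneous symmetric function.

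First I would record the base case: since $B_n^{(0)} = h_n$ and the classical identity $h_n = \sum_{\lambda \vdash n} p_\lambda/z_\lambda$ holds, the definition of $\beta^{(m)}$ forces $\beta^{(0)}(\lambda) = 1$ for every partition $\lambda$. Thus for $\lambda \neq \emptyset$ Proposition \ref{power sum recursion prop} with $m = 0$ reads
\[
\beta^{(1)}(\lambda) = \sum_{n \geq 1} \frac{1}{n!} \sum_{\substack{\mu^{(1)}\cup\cdots\cup\mu^{(n)} = \lambda \\ \mu^{(i)} \neq \emptyset}} \binom{\lambda}{\mu^{(1)},\ldots,\mu^{(n)}} \prod_{i=1}^{n} \left( \sum_{d \mid \gcd(\mu^{(i)})} d^{\ell(\mu^{(i)})-1} \right).
\]

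Next I would rewrite the inner divisor sum. By definition of the divisor power sum, $\sum_{d \mid \gcd(\mu^{(i)})} d^{\ell(\mu^{(i)})-1} = \sigma_{\ell(\mu^{(i)})-1}(\gcd(\mu^{(i)}))$, so the product over $i$ is precisely $\sigma(\mu^{(1)},\ldots,\mu^{(n)})$ as introduced in the definition just above. Substituting this identification back into the expression for $\beta^{(1)}(\lambda)$ yields exactly the definition of $\beta(\lambda)$, and the equality $\beta^{(1)}(\emptyset) = 1 = \beta(\emptyset)$ is immediate from $B_0 = 1$. This identifies $\beta^{(1)}(\lambda) = \beta(\lambda)$ for all $\lambda$, and combining with the definition $B_n = \sum_{\lambda \vdash n} \beta^{(1)}(\lambda) p_\lambda/z_\lambda$ gives the corollary.

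There is essentially no obstacle here: the entire content is bookkeeping to check that the $m = 0$ divisor sum in Proposition \ref{power sum recursion prop} collapses to $\sigma(\mu^{(1)},\ldots,\mu^{(n)})$. The only thing to be a little careful about is the empty partition case, which should be handled separately by noting $B_0 = 1$ matches $\beta(\emptyset) = 1$ and $p_\emptyset/z_\emptyset = 1$.
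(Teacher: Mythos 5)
Your proposal is correct and is exactly the argument the paper intends: the corollary is stated as an immediate consequence of Proposition \ref{power sum recursion prop}, obtained by setting $m=0$, using $\beta^{(0)}(\lambda)=1$ (from $h_n=\sum_{\lambda\vdash n}p_\lambda/z_\lambda$), and recognizing the resulting divisor sums as $\sigma(\mu^{(1)},\ldots,\mu^{(n)})$. Your handling of the empty partition is a harmless extra check; nothing further is needed.
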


\begin{example}
    Below are the power sum expansions of the first few Bell symmetric functions:
    \begin{enumerate}
        \item $B_0 = p_{\emptyset}$
        \item $B_1 = p_{1}$
        \item $B_2 = p_2 + p_{1,1}$
        \item $B_3 = \frac{2}{3} p_3 + \frac{3}{2} p_{2,1} + \frac{5}{6} p_{1,1,1}$
        \item $B_4 = \frac{3}{4}p_4 + p_{3,1} + \frac{7}{8} p_{2,2} + \frac{7}{4} p_{2,1,1} + \frac{5}{8} p_{1,1,1,1} $
        \item $B_5 = \frac{2}{5} p_5 + p_{4,1} + \frac{5}{6} p_{3,2}+\frac{7}{6} p_{3,1,1} + \frac{3}{2}p_{2,2,1} + \frac{5}{3}p_{2,1,1,1}+\frac{13}{30} p_{1,1,1,1,1}$
    \end{enumerate}
\end{example}

\subsection{Recurrence relation}

The Bell numbers $b_n$ are well-known to satisfy the recursion
$b_{n+1} = \sum_{k=0}^{n} \binom{n}{k} b_k$ for all $n \geq 0.$ This may be seen by applying the chain rule to the power series composition $\exp( \exp(t)-1).$ In this section, we will prove an analogous statement for the Bell functions $B_n.$ We will need to introduce the following auxiliary symmetric functions.

\begin{defn}
For $n \geq 1$ define
    $$H_n:= \sum_{\lambda \vdash n} \frac{\sigma_{\ell(\lambda)-1}(\gcd(\lambda))}{z_{\lambda}} p_{\lambda}.$$
\end{defn}

The $H$ functions arise in the following expansion:

\begin{lem}\label{H function lem}
    $$\sum_{n \geq 1} \frac{1}{n} p_n\left( \Omega_0(tX) \right) = \sum_{n \geq 1} H_n t^n$$
\end{lem}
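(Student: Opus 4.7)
The plan is to expand $\Omega_0(tX)$ in the power-sum basis, apply the plethystic $p_n$ operation term by term, and then reindex the resulting double sum so that the coefficient of each $\frac{t^{|\lambda|} p_\lambda}{z_\lambda}$ collects to $\sigma_{\ell(\lambda)-1}(\gcd(\lambda))$. Everything is a direct computation; the main bookkeeping input is the identity $z_{n\mu} = n^{\ell(\mu)} z_\mu$ when $n\mu$ denotes the partition with each part scaled by $n$.

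First I would use part (1) of Lemma \ref{plethystic exp lem} to expand
$$\Omega_0(tX) = \sum_{\mu \neq \emptyset} \frac{t^{|\mu|} p_\mu(X)}{z_\mu}.$$
Since by definition the plethystic operator $p_n$ sends $p_m \mapsto p_{nm}$ and raises the formal variable $t$ to the $n$-th power, applying it termwise yields
$$p_n\bigl(\Omega_0(tX)\bigr) = \sum_{\mu \neq \emptyset} \frac{t^{n|\mu|} p_{n\mu}(X)}{z_\mu},$$
where $n\mu$ denotes the partition obtained by multiplying each part of $\mu$ by $n$.

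Next I would change variables by setting $\lambda = n\mu$; equivalently $\mu = \lambda/n$, which is a valid partition precisely when $n \mid \gcd(\lambda)$. Since $\ell(\mu) = \ell(\lambda)$ and $z_\mu = z_\lambda / n^{\ell(\lambda)}$ (exactly the identity already invoked in the proof of Proposition \ref{power sum recursion prop}), each term transforms as
$$\frac{1}{n} \cdot \frac{t^{n|\mu|} p_{n\mu}(X)}{z_\mu} = \frac{n^{\ell(\lambda)-1}\, t^{|\lambda|} p_\lambda(X)}{z_\lambda}.$$

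Finally I would sum over $n \geq 1$ and swap the order of summation, so the coefficient attached to each $\frac{t^{|\lambda|} p_\lambda(X)}{z_\lambda}$ becomes $\sum_{n \mid \gcd(\lambda)} n^{\ell(\lambda)-1} = \sigma_{\ell(\lambda)-1}(\gcd(\lambda))$ by the definition of $\sigma_r$. Grouping the outer sum by $|\lambda| = n$ and comparing to the defining formula of $H_n$ yields the claim. The only real obstacle is the careful tracking of indices through the reparametrization $\lambda = n\mu$; once the identity $z_{n\mu} = n^{\ell(\mu)} z_\mu$ is recorded, no further ideas are needed.
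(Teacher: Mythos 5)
Your proof is correct and follows essentially the same route as the paper: expand $\Omega_0(tX)$ in the power-sum basis, apply $p_n$ termwise, reindex via $\lambda = n\mu$, and use $z_{\lambda/d} = z_\lambda/d^{\ell(\lambda)}$ to collect the divisor sum $\sigma_{\ell(\lambda)-1}(\gcd(\lambda))$. No issues.
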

\begin{proof}
    First, expand the left hand side as 
    $$\sum_{n\geq 1} \frac{1}{n} p_n \left( \Omega_0(tX) \right) = \sum_{n \geq 1} \frac{1}{n} p_n \left( \sum_{\lambda \neq \emptyset} t^{|\lambda|} \frac{p_{\lambda}(X)}{z_{\lambda}} \right) = \sum_{\substack{n\geq 1 \\ \lambda \neq \emptyset}} t^{n|\lambda|}\frac{p_{n\lambda}(X)}{nz_{\lambda}} = \sum_{\lambda \neq \emptyset} \left( \sum_{d \mid \gcd(\lambda)}\frac{1}{dz_{\lambda/d}}\right) t^{|\lambda|} p_{\lambda}(X).$$ Lastly, using $z_{\lambda/d} = \frac{z_{\lambda}}{d^{\ell(\lambda)}}$ we see that $\sum_{d \mid \gcd(\lambda)}\frac{1}{dz_{\lambda/d}} = \frac{\sigma_{\ell(\lambda)-1}(\gcd(\lambda))}{z_{\lambda}}$ as required.
\end{proof}

The $H$ functions have a simple monomial expansion. Recall that $\sigma_{-1}(n) := \sum_{d \mid n} \frac{1}{d}.$

\begin{prop}
For all $n \geq 1,$
    $$H_n = \sum_{\lambda \vdash n} \sigma_{-1}(\gcd(\lambda)) m_{\lambda}.$$
\end{prop}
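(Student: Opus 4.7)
The plan is to apply Lemma \ref{H function lem} directly, expanding the right-hand side $\sum_{n\geq 1} \frac{1}{n} p_n(\Omega_0(tX))$ as a formal series in the monomials $x^\beta$ and then collecting by partitions. Concretely, I would first write $\Omega_0(tX) = \sum_{\alpha \in Q'} t^{|\alpha|} x^\alpha$, which is immediate from $\Omega(X) = \prod_i (1-x_i)^{-1} = \sum_{\alpha \geq 0} x^\alpha$. Since the plethystic $p_n$ acts on a monomial series by $p_n(x^\alpha) = x^{n\alpha}$, we get
$$p_n(\Omega_0(tX)) = \sum_{\alpha \in Q'} t^{n|\alpha|} x^{n\alpha}.$$

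Next, I would swap the order of summation. Substituting $\beta = n\alpha$ (with $d := n$), the pair $(n,\alpha)$ with $\alpha \in Q'$ ranges over pairs $(d,\beta)$ with $\beta \in Q'$ and $d \mid \gcd(\beta)$ (where $\gcd(\beta)$ denotes the gcd of the nonzero entries of $\beta$). This yields
\begin{align*}
\sum_{n \geq 1} H_n t^n \;=\; \sum_{n \geq 1} \frac{1}{n} p_n(\Omega_0(tX)) \;=\; \sum_{\beta \in Q'} \left( \sum_{d \mid \gcd(\beta)} \frac{1}{d} \right) t^{|\beta|} x^\beta \;=\; \sum_{\beta \in Q'} \sigma_{-1}(\gcd(\beta))\, t^{|\beta|} x^\beta.
\end{align*}

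Finally, I would regroup the sum by the partition underlying each composition $\beta$. Since both $|\beta|$ and $\gcd(\beta)$ are invariants of the weakly decreasing rearrangement $\lambda$ of $\beta$, and since $\sum_{\beta \text{ rearr. of } \lambda} x^\beta = m_\lambda(X)$ by definition, the inner sum collapses to
$$\sum_{n\geq 1} H_n t^n = \sum_{\lambda \neq \emptyset} \sigma_{-1}(\gcd(\lambda))\, t^{|\lambda|}\, m_\lambda(X).$$
Extracting the coefficient of $t^n$ gives the claimed identity. There is no real obstacle here; the only subtlety is the bookkeeping in the substitution $\beta = n\alpha$, which is the same kind of manipulation already carried out in the proof of Lemma \ref{H function lem}.
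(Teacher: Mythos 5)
Your proof is correct and follows essentially the same route as the paper: both invoke Lemma \ref{H function lem} and then re-expand $\sum_{n\geq 1}\frac{1}{n}p_n(\Omega_0(tX))$ in the monomial basis, collecting the divisor sum $\sum_{d\mid\gcd(\lambda)}\frac{1}{d}=\sigma_{-1}(\gcd(\lambda))$. The only cosmetic difference is that you work with individual monomials $x^\alpha$ while the paper works directly with the functions $m_\lambda$ (using $p_n(m_\lambda)=m_{n\lambda}$), which amounts to the same bookkeeping.
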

\begin{proof}
    Using Lemma \ref{H function lem} we may expand $\sum_{n\geq 1} \frac{1}{n} p_n \left( \Omega_0(tX) \right)$ as 
    $$\sum_{n\geq 1} \frac{1}{n} p_n \left( \Omega_0(tX) \right) = \sum_{n\geq 1} \frac{1}{n} p_n \left( \sum_{\lambda \neq \emptyset} t^{|\lambda|} m_{\lambda}(X) \right) = \sum_{\substack{n \geq 1 \\ \lambda \neq \emptyset}} \frac{t^{n|\lambda|}m_{n\lambda}(X)}{n}.$$ Rearranging this series gives 
    $$\sum_{\substack{n \geq 1 \\ \lambda \neq \emptyset}} \frac{t^{n|\lambda|}m_{n\lambda}(X)}{n} = \sum_{\lambda \neq \emptyset} \left( \sum_{d\mid \gcd(\lambda)} \frac{1}{d} \right) t^{|\lambda|} m_{\lambda}(X) = \sum_{\lambda \neq \emptyset} \sigma_{-1}(\gcd(\lambda)) t^{|\lambda|} m_{\lambda}(X).$$
\end{proof}

\begin{example}
    Here are a few of the $H$-functions:
    \begin{enumerate}
        \item $H_0 = m_{\emptyset}$
        \item $H_1 = m_{1}$
        \item $H_{2} = \frac{3}{2}m_{2} + m_{1,1}$
        \item $H_3 = \frac{4}{3} m_{3} + m_{2,1} + m_{1,1,1}$
        \item $H_4 = \frac{7}{4} m_{4} + m_{3,1} + \frac{3}{2} m_{2,2} + m_{2,1,1} + m_{1,1,1,1}$
        \item $H_5 = \frac{6}{5} m_{5} + m_{4,1} + m_{3,2} + m_{3,1,1} + m_{2,2,1} + m_{2,1,1,1}+m_{1,1,1,1,1}$
        \item $H_6 = 2m_6 + m_{5,1} + \frac{3}{2}m_{4,2}+ m_{4,1,1} + \frac{4}{3}m_{3,3} + m_{3,2,1} + \frac{3}{2}m_{2,2,2}+m_{2,2,1,1}+m_{2,1,1,1,1}+m_{1,1,1,1,1,1}$
    \end{enumerate}
Note that $H_n$ depends heavily on the divisors of $n.$ In particular, for a prime $p$, 
$H_p = \frac{1}{p} m_{p} + h_p.$

\end{example}

\begin{prop}
For all $n \geq 0,$
    $$B_{n+1} = \sum_{k=0}^{n} \frac{n-k+1}{n+1} H_{n-k+1} B_k.$$
\end{prop}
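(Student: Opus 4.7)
The plan is to derive the recurrence by differentiating the identity $\Omega\bigl(\Omega_0(tX)\bigr) = \exp\bigl(\sum_{n \geq 1} \tfrac{1}{n} p_n(\Omega_0(tX))\bigr)$ with respect to $t$, mirroring exactly how the classical recursion $b_{n+1} = \sum_{k=0}^n \binom{n}{k} b_k$ is obtained by applying the chain rule to $\exp(\exp(t)-1)$. The key observation is that, after the plethystic exponential is written in terms of power sums, the resulting series in $t$ with coefficients in $\Lambda$ satisfies a first order ODE whose coefficient-by-coefficient reading is exactly the claimed recursion.

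Concretely, first I would set $F(t) := \sum_{n\geq 0} B_n(X) t^n = \Omega(\Omega_0(tX))$ and $G(t) := \sum_{n \geq 1} \tfrac{1}{n} p_n(\Omega_0(tX))$. By part (1) of Lemma \ref{plethystic exp lem}, applied to the argument $\Omega_0(tX)$, we have $F(t) = \exp(G(t))$ as a formal power series in $t$ with coefficients in $\Lambda$. Next, I would invoke Lemma \ref{H function lem} to rewrite $G(t) = \sum_{n \geq 1} H_n t^n$, so the generating function identity becomes
$$\sum_{n \geq 0} B_n t^n = \exp\!\left( \sum_{n \geq 1} H_n t^n \right).$$

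Now differentiating both sides with respect to $t$ yields $F'(t) = G'(t) F(t)$, where $F'(t) = \sum_{n \geq 0} (n+1) B_{n+1} t^n$ and $G'(t) = \sum_{n \geq 0} (n+1) H_{n+1} t^n$. Multiplying out the right-hand side and extracting the coefficient of $t^n$ gives
$$(n+1) B_{n+1} = \sum_{k=0}^{n} (n-k+1) H_{n-k+1} B_k,$$
which is exactly the claimed identity after dividing by $n+1$.

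There is no real obstacle here beyond a clean setup; the only point to verify is that termwise differentiation is legal, which is immediate since the equation is an equality of formal power series in $t$ whose coefficients are elements of the graded completion $\Lambda$, and both the exponential and its derivative are computed formally via the power sum expansion guaranteed by Lemma \ref{plethystic exp lem}(1).
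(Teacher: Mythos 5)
Your proof is correct and follows essentially the same route as the paper: both differentiate $\Omega(\Omega_0(tX)) = \exp\left(\sum_{n\geq 1}\tfrac{1}{n}p_n(\Omega_0(tX))\right)$ in $t$, identify the inner series as $\sum_{n\geq 1}H_n t^n$ via Lemma \ref{H function lem}, and extract the coefficient of $t^n$ from $F'(t)=G'(t)F(t)$. No substantive differences.
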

\begin{proof}
We begin by applying the chain rule:
    \begin{align*}
        &\frac{d}{dt} \Omega\left( \Omega_0(tX) \right) \\
        &= \frac{d}{dt} \exp\left( \sum_{n \geq 1} \frac{1}{n} p_n\left( \Omega_0(tX) \right) \right) \\
        &= \exp\left( \sum_{n \geq 1} \frac{1}{n} p_n\left( \Omega_0(tX) \right) \right)  \frac{d}{dt} \left( \sum_{n \geq 1} \frac{1}{n} p_n\left( \Omega_0(tX) \right) \right) \\
        &= \Omega\left( \Omega_0(tX) \right) \left(\frac{d}{dt} \sum_{n \geq 1} H_n(X) t^n \right) \\
        &= \Omega\left( \Omega_0(tX) \right) \left(\sum_{n \geq 1} nH_n(X) t^{n-1} \right) \\
        &= \left(\sum_{n \geq 0}B_n t^n \right) \left( \sum_{n \geq 0} (n+1)H_{n+1}(X)t^{n}  \right)  \\ 
        &=  \sum_{n \geq 0} \left( \sum_{k=0}^{n} (n-k+1)H_{n-k+1}(X)B_k(X) \right) t^n  \\
    \end{align*}

    On the other hand, 
    $$\frac{d}{dt} \Omega\left( \Omega_0(tX) \right) = \frac{d}{dt} \sum_{n \geq 0} B_n(X) t^n = \sum_{n \geq 0} (n+1)B_{n+1}(X) t^{n}$$ and therefore,
    $$(n+1)B_{n+1} = \sum_{k=0}^{n} (n-k+1)H_{n-k+1}B_k.$$ By dividing by $n+1$ we get the result.
\end{proof}

\subsection{Relation to restriction coefficients}

In this last section, we show that the Schur expansion coefficients of the Bell symmetric functions $B_n$ are directly related to the \textbf{restriction coefficients}. 

\begin{defn}
For a partition $\lambda$ and $n \geq 0$ let $\mathbb{S}^{\lambda}\mathbb{C}^n$ denote the $\mathrm{GL}_{n}(\mathbb{C})$ module obtained by applying the Schur functor $\mathbb{S}^{\lambda}$ to the standard representation $\mathbb{C}^n$ of $\mathrm{GL}_{n}(\mathbb{C})$. We embed $\mathfrak{S}_n \subset \GL_n(\mathbb{C})$ as the set of \textit{\textbf{permutation matrices}}. For $|\mu| = n$ let $V_{\mu}$ denote the irreducible Specht module of $\mathfrak{S}_n$ over $\mathbb{C}$. For $\lambda,\mu \in \Par$ with $n = |\mu|$ define the restriction coefficient $r_{\lambda}^{\mu}$ as 
    $$r_{\lambda}^{\mu}:= \dim_{\mathbb{C}} \Hom_{\mathfrak{S}_n}\left( V_{\mu} , \mathrm{Res}^{\GL_n(\mathbb{C})}_{\mathfrak{S}_n} \mathbb{S}^{\lambda} \mathbb{C}^n \right).$$ Define the (single-variable) \textit{\textbf{restriction series}} $R_{\lambda}(z) \in \mathbb{Z}_{\geq 0}[[z]]$ by 
    $$R_{\lambda}(z):= \sum_{n \geq 0} r_{\lambda}^{(n)}z^{n}.$$
\end{defn}

Note that $r_{\lambda}^{(n)} = \dim_{\mathbb{C}} \left( \mathbb{S}^{\lambda} \mathbb{C}^n \right)^{\mathfrak{S}_n}$ is the dimension of the $\mathfrak{S}_n$-invariants of $\mathbb{S}^{\lambda} \mathbb{C}^n.$ For $|\mu| = n$ let $c_{\mu} \in \mathbb{C}[\mathfrak{S}_{n}]$ denote Young symmetrizer corresponding to $\mu$. Then for $|\lambda| = k$, $r_{\lambda}^{(n)} = \dim_{\mathbb{C}} c_{1^n} (\mathbb{C}^n)^{\otimes k} c_{\lambda}$ where $\mathfrak{S}_{n}$ acts on the left and $\mathfrak{S}_{k}$ on the right of $(\mathbb{C}^n)^{\otimes k}.$ For details regarding the special case of the restriction coefficients of the form $r_{\lambda}^{(n)}$ we refer the reader to Narayanan--Paul--Prasad--Srivastava \cite{NSPP21}. The restriction coefficients $r_{\lambda}^{\mu}$ may be understood algebraically using Littlewood's reciprocity theorem:

\begin{thm}[Littlewood \cite{Littlewood58}]\label{tauberian thm}
For $\lambda,\mu \in \Par,$
    $$r_{ \lambda}^{\mu} = \langle s_{\lambda},s_{\mu}(\Omega(X)) \rangle.$$
\end{thm}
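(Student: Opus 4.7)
The plan is to prove this classical reciprocity of Littlewood by combining a representation-theoretic character computation with a Cauchy-based plethystic manipulation. First, I would compute the restriction multiplicity directly: for a permutation $\sigma \in \mathfrak{S}_n$ viewed as a matrix in $\GL_n(\mathbb{C})$, the eigenvalues of $\sigma$ consist, for each cycle of length $k$ in $\sigma$, of all $k$-th roots of unity. Writing $\epsilon_\nu$ for this multiset when $\sigma$ has cycle type $\nu \vdash n$, the character of $\mathcal{V}_\lambda^{(n)}$ at $\sigma$ equals $s_\lambda(\epsilon_\nu)$, so by character orthogonality (applied in the regime $n = |\mu| \geq \ell(\lambda)$) the restriction multiplicity reads $r_{\mu, \lambda} = \sum_{\nu \vdash n} z_\nu^{-1}\, \chi^\mu_\nu\, s_\lambda(\epsilon_\nu)$, where $\chi^\mu_\nu$ is the Specht character value.

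The key step is the plethystic identity $p_\nu(\Omega(Y)) = \sum_\lambda s_\lambda(\epsilon_\nu)\, s_\lambda(Y)$ for each $\nu \in \Par$. By the Cauchy identity in Lemma~\ref{plethystic exp lem}(3) applied to the plethystic product $\epsilon_\nu \cdot Y$, one has $\Omega(\epsilon_\nu \cdot Y) = \sum_\lambda s_\lambda(\epsilon_\nu)\, s_\lambda(Y)$. To match this with $p_\nu(\Omega(Y))$, a direct computation of $k$-th power sums of roots of unity yields $p_k(\epsilon_\nu) = \sum_i \nu_i\, \mathbbm{1}[\nu_i \mid k]$, and substituting into Lemma~\ref{plethystic exp lem}(1) gives
\[
\Omega(\epsilon_\nu \cdot Y) = \exp\Bigl(\sum_k \tfrac{p_k(\epsilon_\nu)\, p_k(Y)}{k}\Bigr) = \prod_i \exp\Bigl(\sum_{m \geq 1} \tfrac{p_{m\nu_i}(Y)}{m}\Bigr) = \prod_i \Omega(p_{\nu_i}(Y)) = p_\nu(\Omega(Y)),
\]
the final equalities using $p_n \circ p_m = p_{nm}$ plethystically together with the multiplicativity of $\Omega$.

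Finally, expanding $s_\mu = \sum_{\nu \vdash |\mu|} z_\nu^{-1}\, \chi^\mu_\nu\, p_\nu$ in the power-sum basis, applying plethysm by $\Omega$ termwise, and pairing with $s_\lambda$ yields
\[
\langle s_\lambda, s_\mu(\Omega(X)) \rangle = \sum_\nu \tfrac{\chi^\mu_\nu}{z_\nu}\, \langle s_\lambda, p_\nu(\Omega(X))\rangle = \sum_\nu \tfrac{\chi^\mu_\nu\, s_\lambda(\epsilon_\nu)}{z_\nu},
\]
which exactly matches the first step. The hard part will be the plethystic identity above: one has to treat $\epsilon_\nu$ (a finite multiset of complex roots of unity) as a valid plethystic alphabet in the framework established earlier in the paper and rigorously justify the formal manipulation $p_k(\epsilon_\nu \cdot Y) = p_k(\epsilon_\nu)\, p_k(Y)$; once this is settled, the theorem follows cleanly from Cauchy duality and Specht-character orthogonality.
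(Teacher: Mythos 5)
Your core computation is correct and is in fact the standard proof of Littlewood's reciprocity; the paper itself offers no argument (it only cites Littlewood), so you are supplying a proof where none is given. The eigenvalue description of permutation matrices, the formula $p_k(\epsilon_\nu)=\sum_i \nu_i\,[\nu_i\mid k]$, and the resulting identity $p_\nu(\Omega(Y))=\prod_i\Omega(p_{\nu_i}(Y))=\sum_\kappa s_\kappa(\epsilon_\nu)s_\kappa(Y)$ are all right, and the step you flag as the ``hard part'' is unproblematic: $\epsilon_\nu$ is a finite alphabet of $n$ numbers, so one simply specializes one side of the Cauchy identity in Lemma \ref{plethystic exp lem} and uses that plethysm by $p_k$ commutes with $\Omega$.

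The genuine gap is the final identification ``which exactly matches the first step.'' What your orthogonality computation actually establishes is that for a partition $\kappa\vdash n$ (with $n\geq\ell(\lambda)$),
\[
\dim_{\mathbb{C}}\Hom_{\mathfrak{S}_n}\bigl(S^{\kappa},\,\Res^{\GL_n(\mathbb{C})}_{\mathfrak{S}_n}\mathcal{V}_\lambda\bigr)\;=\;\sum_{\nu\vdash n}\frac{\chi^{\kappa}_\nu\,s_\lambda(\epsilon_\nu)}{z_\nu}\;=\;\langle s_\lambda,\,s_{\kappa}(\Omega(X))\rangle,
\]
and by taking $\kappa=\mu$, $n=|\mu|$ you compute the multiplicity of $S^{\mu}$ in the restriction from $\GL_{|\mu|}$. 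But the paper defines $r_{\mu,\lambda}$ as $\lim_{n}r^{(n)}_{\mu,\lambda}$, where $r^{(n)}_{\mu,\lambda}$ is the multiplicity of the Specht module for the \emph{padded} partition $\mu^{(n)}=(n-|\mu|,\mu_1,\ldots,\mu_{\ell(\mu)})$; the relevant character is $\chi^{\mu^{(n)}}_\nu$ with $\nu\vdash n$, followed by a stabilization argument. These are different quantities: for $\lambda=(2)$, $\mu=(1)$ one has $\langle s_2,s_1(\Omega(X))\rangle=\langle s_2,h_2\rangle=1$, whereas $\Res\,\Sym^2(\mathbb{C}^n)$ has Frobenius characteristic $h_{n-1}h_1+h_{n-2}h_2$, so $S^{(n-1,1)}$ occurs with multiplicity $2$ for all large $n$. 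So your last sentence silently replaces $\chi^{\mu^{(n)}}_\nu$, $\nu\vdash n$, by $\chi^{\mu}_\nu$, $\nu\vdash|\mu|$, and the equality being asserted fails for the paper's definition of $r_{\mu,\lambda}$. The fix is to state and prove the exact, unstabilized version $r^{(n)}_{\mu,\lambda}=\langle s_\lambda,s_{\mu^{(n)}}(\Omega(X))\rangle$ --- your argument gives this verbatim with $\kappa=\mu^{(n)}$ --- and to observe that the only instance used later (in Lemma \ref{restriction lemma}, with $\mu=(n)$ so that the Specht module is trivial) then reads $\langle s_\lambda,h_n(\Omega(X))\rangle=\dim(\mathcal{V}_\lambda)^{\mathfrak{S}_n}$, exactly the form the paper relies on.
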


Using Littlewood's reciprocity result, we find a convenient algebraic formula for the restriction series $R_{\lambda}(z):$

\begin{lem}\label{restriction lemma}
    $$\Omega\left(z \Omega(X) \right) = \sum_{\lambda} R_{\lambda}(z) s_{\lambda}(X)$$
\end{lem}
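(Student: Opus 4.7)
The plan is to expand $\Omega(z\Omega(X))$ using the series definition $\Omega(Y) = \sum_{n\geq 0} h_n(Y)$, pull out powers of the formal variable $z$ using the scaling property of plethystic substitution, and then apply Theorem \ref{tauberian thm} in the one-row case $\mu = (n)$ to identify the Schur coefficients with restriction coefficients $r_{(n),\lambda}$.

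First I would write
$$\Omega(z\Omega(X)) = \sum_{n\geq 0} h_n(z\Omega(X)).$$
Since $z$ is a single commuting variable, $p_k(z) = z^k$, so by multiplicativity of plethystic substitution in its argument $p_k(z\Omega(X)) = z^k p_k(\Omega(X))$. Because $h_n$ is a polynomial in $p_1, p_2, \ldots$ in which every monomial $p_\lambda$ has $|\lambda| = n$, this scaling propagates uniformly, giving $h_n(z\Omega(X)) = z^n h_n(\Omega(X))$, and therefore $\Omega(z\Omega(X)) = \sum_{n\geq 0} z^n h_n(\Omega(X))$.

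Next, for each fixed $n$ I would expand $h_n(\Omega(X)) = s_{(n)}(\Omega(X))$ in the Schur basis. Using the Hall inner product together with Littlewood's reciprocity (Theorem \ref{tauberian thm}),
$$\langle s_{(n)}(\Omega(X)), s_\lambda(X)\rangle = r_{(n),\lambda},$$
so $h_n(\Omega(X)) = \sum_\lambda r_{(n),\lambda}\, s_\lambda(X)$. Substituting and interchanging the order of summation then yields
$$\Omega(z\Omega(X)) = \sum_{n\geq 0}\sum_\lambda r_{(n),\lambda}\, z^n s_\lambda(X) = \sum_\lambda \Bigl(\sum_{n\geq 0} r_{(n),\lambda}\, z^n\Bigr) s_\lambda(X) = \sum_\lambda R_\lambda(z)\, s_\lambda(X).$$

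There is no serious obstacle; the one item worth a moment of care is the scaling step $h_n(z\Omega(X)) = z^n h_n(\Omega(X))$. This is a direct consequence of the definition of plethystic substitution together with the fact that $h_n$ is homogeneous of weight $n$ under the grading $\deg(p_k) = k$. The reciprocity identity then does all the representation-theoretic work for us.
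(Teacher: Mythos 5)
Your proof is correct and follows exactly the same route as the paper: expand $\Omega(z\Omega(X)) = \sum_n h_n(z\Omega(X))$, extract $z^n$ by homogeneity, apply Littlewood's reciprocity with $\mu = (n)$ to get $h_n(\Omega(X)) = \sum_\lambda r_{(n),\lambda} s_\lambda(X)$, and interchange sums. The extra care you take with the scaling step $h_n(z\Omega(X)) = z^n h_n(\Omega(X))$ is welcome but the argument is otherwise identical.
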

\begin{proof}
Using Theorem \ref{tauberian thm} we find
    \begin{align*}
        \Omega\left(z \Omega(X) \right) &= \sum_{n \geq 0} h_n(z\Omega(X)) \\
        &= \sum_{n \geq 0} z^{n} h_{n}(\Omega(X))\\ 
        &= \sum_{n \geq 0} z^{n} \sum_{\lambda} r_{\lambda}^{(n)} s_{\lambda}(X)\\
        &= \sum_{\lambda} \left( \sum_{n \geq 0} z^{n} r_{\lambda}^{(n)} \right) s_{\lambda}(X).\\
    \end{align*}

\end{proof}

We will use the following notation for the Schur expansion coefficients of the Bell functions $B_n.$

\begin{defn}
    Define $A_{\lambda} \in \mathbb{Z}_{\geq 0}$ via 
    $B_n = \sum_{\lambda \vdash n} A_{\lambda} s_{\lambda}.$
\end{defn}

In order to relate the Schur coefficients $A_{\lambda}$ to the restriction coefficients $r_{\lambda}^{\mu}$, we will need to prove a simple result regarding the restriction series $R_{\lambda}(z).$

\begin{prop}
    For all $\lambda \in \Par,$
     $(1-z)R_{\lambda}(z)$ is a polynomial with degree at most $|\lambda|$. 
\end{prop}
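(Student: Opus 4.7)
The plan is to reduce the problem to analyzing the simpler generating series $\Omega(z\Omega_0(X))$, exploiting the fact that $\Omega_0(X)$ has no constant term.

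First I would use the multiplicativity $\Omega(A+B)=\Omega(A)\Omega(B)$ from Lemma~\ref{plethystic exp lem}. Since $z\Omega(X)=z+z\Omega_0(X)$, this gives $\Omega(z\Omega(X))=\Omega(z)\,\Omega(z\Omega_0(X))$, and $\Omega(z)=\exp\bigl(\sum_{k\geq 1}z^k/k\bigr)=(1-z)^{-1}$. Rearranging,
$$(1-z)\,\Omega(z\Omega(X))=\Omega(z\Omega_0(X)).$$
Combined with Lemma~\ref{restriction lemma}, this identifies $(1-z)R_\lambda(z)$ with the coefficient of $s_\lambda(X)$ in the Schur expansion of $\Omega(z\Omega_0(X))$.

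Next, since $h_k$ is homogeneous of degree $k$, we have $h_k(zA)=z^k h_k(A)$, so
$$\Omega(z\Omega_0(X))=\sum_{k\geq 0}h_k(z\Omega_0(X))=\sum_{k\geq 0}z^k\,h_k(\Omega_0(X)).$$
The main step is then to check that $h_k(\Omega_0(X))$ is supported in total degrees $\geq k$. For this I would expand $h_k=\sum_{\mu\vdash k}z_\mu^{-1}p_\mu$ and use that each $p_j(\Omega_0(X))$ begins in degree $j$, since $\Omega_0(X)$ starts in degree $1$ and plethystic $p_j$ multiplies all multidegrees by $j$. Hence $p_\mu(\Omega_0(X))$ begins in degree $|\mu|=k$, and the same holds for $h_k(\Omega_0(X))$. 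Consequently $[s_\lambda]\,h_k(\Omega_0(X))=0$ whenever $k>|\lambda|$, so
$$(1-z)R_\lambda(z)=\sum_{k=0}^{|\lambda|}\bigl([s_\lambda]\,h_k(\Omega_0(X))\bigr)z^k$$
is a polynomial in $z$ of degree at most $|\lambda|$.

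I do not anticipate a serious obstacle: the whole argument rests on the elementary principle that plethystic substitution into a series with vanishing constant term cannot decrease minimum degrees. The only real bookkeeping is separating the $z$-grading from the $X$-grading, and once the factor $\Omega(z)=(1-z)^{-1}$ has been split off from $\Omega(z\Omega(X))$, the two gradings decouple cleanly.
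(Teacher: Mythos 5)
Your proof is correct, and it takes a genuinely different (and somewhat slicker) route than the paper's in its second half. The first step is shared in spirit: the paper also isolates the factor $\frac{1}{1-z}$, though it does so by writing $\Omega(z\Omega(X))=\prod_{\alpha\in Q}(1-zx^{\alpha})^{-1}$ and pulling out the $\alpha=0$ term, whereas you use the additivity $\Omega(z\Omega(X))=\Omega(z)\,\Omega(z\Omega_0(X))$ directly; these are the same identity in different clothing (and indeed the paper uses your form later, in the proof of Proposition~\ref{Bell coefficients from restriction}). Where you diverge is the degree bound. The paper expands $\Omega(z\Omega(X))$ in the monomial basis, identifies the coefficient of $m_\mu$ as $f_\mu(z)/(1-z)$ with $f_\mu$ an explicit degree-$|\mu|$ polynomial counting weighted vector partitions of $\mu$, and then converts to Schur coefficients via the inverse Kostka numbers $s_\lambda=\sum_\mu K^{(-1)}_{\lambda,\mu}h_\mu$. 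You instead observe that $h_k(z\Omega_0(X))=z^kh_k(\Omega_0(X))$ and that $h_k(\Omega_0(X))$ lives in $X$-degrees $\geq k$ because plethysm into a series with no constant term cannot lower minimal degree; this kills all $z^k$ with $k>|\lambda|$ in the coefficient of $s_\lambda$. Your argument is shorter and more conceptual for the statement as posed; the paper's heavier computation buys the explicit formula $(1-z)R_\lambda(z)=\sum_\mu K^{(-1)}_{\lambda,\mu}f_\mu(z)$, which it needs afterwards (e.g.\ to deduce $A_\lambda=\sum_\mu K^{(-1)}_{\lambda,\mu}\rho(\mu)$ from $\rho(\mu)=f_\mu(1)$), so the extra work is not wasted in context.
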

\begin{proof}
Using Lemma \ref{restriction lemma} we see that 
$$R_{\lambda}(z) = \langle s_{\lambda}(X), \Omega\left(z \Omega(X) \right) \rangle.$$ We will use the expansion $s_{\lambda} = \sum_{\mu \vdash n} K^{(-1)}_{\lambda,\mu} h_{\mu}$ where $K^{(-1)}_{\lambda,\mu}$ are the \textbf{inverse Kostka numbers} and instead compute $\langle h_{\mu}(X), \Omega\left(z \Omega(X) \right) \rangle$. Following similar to the proof of Proposition \ref{monomial recursion prop}, 

\begin{align*}
    &\Omega\left(z\Omega(X) \right) \\
    &= \Omega\left( \sum_{\alpha \in Q} zx^{\alpha} \right)\\
    &= \prod_{\alpha \in Q} \frac{1}{1-zx^{\alpha}}\\
    &= \frac{1}{1-z}\prod_{\alpha \in Q\setminus \{0\}} \frac{1}{1-zx^{\alpha}}\\
    &= \frac{1}{1-z} \sum_{\alpha \in Q} \left( \sum_{\substack{\phi:Q'\rightarrow \mathbb{Z}_{\geq 0} \\ \sum_{\beta \in Q\setminus \{0\}} \phi(\beta)\beta = \alpha } } z^{\sum_{\beta \in Q\setminus \{0\}}\phi(\beta)}  \right) x^{\alpha }\\
    &= \frac{1}{1-z} \sum_{\mu} \left( \sum_{\substack{\phi:Q'\rightarrow \mathbb{Z}_{\geq 0} \\ \sum_{\beta \in Q\setminus \{0\}} \phi(\beta)\beta = \mu } } z^{\sum_{\beta \in Q\setminus \{0\}}\phi(\beta)}  \right) m_{\mu}(X).\\
\end{align*}

Define the series
$$f_{\mu}(z):= \sum_{\substack{\phi:Q'\rightarrow \mathbb{Z}_{\geq 0} \\ \sum_{\beta \in Q\setminus \{0\}} \phi(\beta)\beta = \mu } } z^{\sum_{\beta \in Q\setminus \{0\}}\phi(\beta)}.$$ Note that $f_{\mu}(z)$ is a polynomial with degree $|\mu|.$ The leading coefficient of $z^{|\mu|}$ is always $1$ corresponding to the vector partition $\mu= \mu_1(1,0\ldots) + \mu_2 (0,1,0,\ldots) + \ldots + \mu_{\ell(\mu)}(0,\ldots,0,1,0,\ldots)$. Now we see 
$$\Omega\left(z\Omega(X) \right) = \sum_{\mu} \frac{f_{\mu}(z)}{1-z} m_{\mu}(X).$$ This means that 
$$\langle h_{\mu}(X), \Omega\left(z\Omega(X) \right) \rangle = \frac{f_{\mu}(z)}{1-z}.$$ Putting this all together, 
\begin{align*}
    &R_{\lambda}(z) = \langle s_{\lambda}(X), \Omega\left(z \Omega(X) \right) \rangle\\
    &= \sum_{\mu\vdash |\lambda|} K^{(-1)}_{\lambda,\mu} \langle h_{\mu}(X), \Omega\left(z \Omega(X) \right) \rangle\\
    &= \sum_{\mu\vdash |\lambda|} K^{(-1)}_{\lambda,\mu} \frac{f_{\mu}(z)}{1-z}\\
    &= \frac{\sum_{\mu\vdash |\lambda|} K^{(-1)}_{\lambda,\mu} f_{\mu}(z)}{1-z}.\\
\end{align*}

It is clear that $\sum_{\mu\vdash n} K^{(-1)}_{\lambda,\mu} f_{\mu}(z)$ is a polynomial with degree at most $|\lambda|.$

\end{proof}

The Schur coefficients $A_{\lambda}$ are the $z = 1$ residues of the restriction series $R_{\lambda}(z).$

\begin{prop}\label{Bell coefficients from restriction}
    $$A_{\lambda} = \lim_{z \rightarrow 1} (1-z)R_{\lambda}(z)$$
\end{prop}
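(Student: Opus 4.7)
The plan is to evaluate $(1-z)R_\lambda(z)$ directly at $z=1$, exploiting the explicit polynomial form established in the preceding proposition. From that proposition, we have the identity
$$(1-z)R_\lambda(z) = \sum_{\mu \vdash |\lambda|} K^{(-1)}_{\lambda,\mu}\, f_\mu(z),$$
where $f_\mu(z) = \sum_{\phi} z^{\sum_\beta \phi(\beta)}$ with the sum over finitely supported maps $\phi: Q' \to \mathbb{Z}_{\geq 0}$ satisfying $\sum_{\beta \in Q'} \phi(\beta)\beta = \mu$. Since the right hand side is a polynomial in $z$, the limit as $z \to 1$ exists and equals the value at $z = 1$, so the task reduces to identifying that value with $A_\lambda$.

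The first key observation is that $f_\mu(1) = \rho(\mu)$: setting $z = 1$ in $f_\mu(z)$ simply counts the finitely supported maps $\phi: Q' \to \mathbb{Z}_{\geq 0}$ with $\sum_{\beta} \phi(\beta)\beta = \mu$, which is the definition of the number of vector partitions of $\mu$. The second observation expresses $A_\lambda$ in terms of $\rho$: from the corollary giving the monomial expansion $B_{|\lambda|} = \sum_{\mu \vdash |\lambda|}\rho(\mu)\,m_\mu$, the inverse-Kostka expansion $s_\lambda = \sum_\mu K^{(-1)}_{\lambda,\mu}h_\mu$, and the duality $\langle h_\mu, m_\nu\rangle = \delta_{\mu\nu}$, one computes
$$A_\lambda = \langle s_\lambda, B_{|\lambda|}\rangle = \sum_{\mu \vdash |\lambda|} K^{(-1)}_{\lambda,\mu}\rho(\mu).$$
Combining these two observations gives
$$\lim_{z \to 1}(1-z)R_\lambda(z) = \sum_{\mu\vdash |\lambda|} K^{(-1)}_{\lambda,\mu} f_\mu(1) = \sum_{\mu\vdash |\lambda|} K^{(-1)}_{\lambda,\mu}\rho(\mu) = A_\lambda.$$

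There is no substantial obstacle: the preceding proposition already did the hard work of isolating the polynomial structure of $(1-z)R_\lambda(z)$ and pinning its coefficients to the quantities $f_\mu(z)$. The only mild care is the change-of-basis bookkeeping between the Schur, complete homogeneous, and monomial bases via inverse Kostka numbers, which is standard. As an alternative (perhaps more conceptual) route, one could instead factor $\Omega(z\Omega(X)) = \Omega(z)\Omega(z\Omega_0(X)) = (1-z)^{-1}\Omega(z\Omega_0(X))$, note that at $z=1$ this gives $\Omega(\Omega_0(X)) = \sum_n B_n$, and pair with $s_\lambda$; however, carrying out the limit step in that approach requires justifying continuity of the Hall pairing in $z$, whereas the polynomial approach above sidesteps this entirely.
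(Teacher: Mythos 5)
Your proof is correct, but it takes a genuinely different route from the paper's. The paper's argument is the one you sketch as an ``alternative'' at the end: it factors $\Omega(z\Omega_0(tX)) = \Omega(z\Omega(tX)-z) = (1-z)\Omega(z\Omega(tX))$, lets $z \to 1$ so the left side becomes $\sum_n B_n(X)t^n$, and reads off the Schur coefficients of the right side as $\lim_{z\to 1}(1-z)R_\lambda(z)$ via Lemma \ref{restriction lemma}. Your main route instead leans on the explicit identity $(1-z)R_\lambda(z) = \sum_{\mu\vdash|\lambda|}K^{(-1)}_{\lambda,\mu}f_\mu(z)$ from the preceding proposition, evaluates the polynomial at $z=1$ using $f_\mu(1)=\rho(\mu)$, and independently computes $A_\lambda = \langle s_\lambda, B_{|\lambda|}\rangle = \sum_\mu K^{(-1)}_{\lambda,\mu}\rho(\mu)$ from the monomial expansion $B_n = \sum_\mu \rho(\mu)m_\mu$ and the duality $\langle h_\mu,m_\nu\rangle = \delta_{\mu\nu}$. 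What each buys: the paper's version is a two-line generating-function manipulation that works uniformly in all $\lambda$ at once, but it silently interchanges the $z\to 1$ limit with an infinite sum over $\lambda$; your version is more computational but entirely finite at each $\lambda$ (a polynomial evaluated at $1$), so the limit is unproblematic, and it yields the closed formula $A_\lambda = \sum_\mu K^{(-1)}_{\lambda,\mu}\rho(\mu)$ as a byproduct --- a formula the paper only records afterward as a consequence of this proposition. In effect you have turned the paper's downstream remark into the engine of the proof, which is a perfectly sound reorganization since that formula depends only on the earlier monomial-expansion corollary and not on the proposition being proved.
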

\begin{proof}
Notice that
    $$\Omega(z\Omega_0(tX)) = \Omega(z\Omega(tX) -z) = (1-z)\Omega(z\Omega(tX)).$$ The left hand side limits to 
    $$\Omega(\Omega_0(tX)) = \sum_{n \geq 0} B_n(X)t^n$$ whereas the right hand side is given by 
    $$\sum_{\lambda} \left(\lim_{z \rightarrow 1}(1-z)R_{\lambda}(z) \right) s_{\lambda}(X)t^{|\lambda|}.$$
\end{proof}

The main result of this section is an application of a weak version of the Hardy--Littlewood \textbf{Tauberian theorem} which we now review.

\begin{thm}[Hardy--Littlewood]\label{tauberian}
Suppose $a_n \geq 0$ for all $n \geq 0$ and $\sum_{n \geq 0} a_n z^n \sim \frac{1}{1-z}$ as $z \uparrow 1.$ Then as $n \rightarrow \infty$ we have 
$ \sum_{k\leq n} a_k \sim n.$
\end{thm}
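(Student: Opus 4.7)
The plan is to use Karamata's Tauberian method, which lifts the weak asymptotic $(1-z)A(z) \to 1$ (where $A(z) := \sum_{n \geq 0} a_n z^n$) from monomial test functions to the indicator $\chi_{(e^{-1},1]}$, via polynomial approximation. Convergence of $A(z)$ on $|z| < 1$ is immediate from positivity and the hypothesis, so I may freely evaluate $A$ along the real interval $[0,1)$.

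The first step is the monomial case. For each $k \geq 0$, write
\[
(1-z) \sum_{n \geq 0} a_n z^n (z^n)^k \;=\; \frac{1-z}{1-z^{k+1}} \cdot (1 - z^{k+1}) A(z^{k+1}).
\]
As $z \uparrow 1$ the first factor tends to $1/(k+1)$, while the second factor tends to $1$ by applying the hypothesis at $w = z^{k+1} \uparrow 1$. Thus the left side tends to $\tfrac{1}{k+1} = \int_0^1 x^k\,dx$, and by linearity $(1-z) \sum a_n z^n p(z^n) \to \int_0^1 p(x)\,dx$ for every polynomial $p$. Next, for continuous $f:[0,1]\to\mathbb{R}$ and $\epsilon > 0$, Weierstrass approximation yields polynomials $p \leq f \leq q$ on $[0,1]$ with $\int_0^1 (q-p)\,dx < \epsilon$. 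Since $a_n \geq 0$ and $z^n \in [0,1]$, a squeeze gives the same limit $\int_0^1 f(x)\,dx$ for $f$.

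The final step is to apply this with $f(x) := \chi_{(e^{-1},1]}(x)/x$, approximating $f$ from above and below by continuous functions that agree with $f$ outside small neighborhoods of the jump at $e^{-1}$, and pinching by positivity of $a_n$. Setting $z = e^{-1/N}$, one has $z^n > e^{-1}$ iff $n < N$, so
\[
(1-z)\sum_n a_n z^n f(z^n) \;=\; (1-z)\sum_{n < N} a_n \;=\; (1 - e^{-1/N})\, s_{N-1},
\]
where $s_N := \sum_{k \leq N} a_k$. Combined with $1 - e^{-1/N} \sim 1/N$ and $\int_0^1 f(x)\,dx = \int_{e^{-1}}^1 dx/x = 1$, this yields $s_N \sim N$.

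The main obstacle is the last step: since $\chi_{(e^{-1},1]}/x$ is discontinuous, the continuous-function limit does not apply directly, and the squeeze argument needs the mass carried by those $a_n$ with $z^n$ in a small neighborhood of $e^{-1}$ to be uniformly controlled. That control follows by feeding a small continuous bump supported near $e^{-1}$ back into the continuous case, but stitching the one-sided bounds together cleanly — and in particular ruling out that a narrow spike of mass exactly at the jump spoils the estimate — is the delicate point where positivity of the $a_n$ is essential.
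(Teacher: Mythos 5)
Your argument is correct: it is the standard Karamata proof of the Hardy--Littlewood Tauberian theorem. The paper states this result without proof, as a cited classical fact, so there is no internal argument to compare against; your write-up supplies one. The monomial computation, the Weierstrass squeeze for continuous test functions, and the evaluation at $z=e^{-1/N}$ are all carried out correctly (note also that passing from $s_{N-1}\sim N$ to $s_N\sim N$ uses monotonicity of the partial sums, which you have from $a_n\ge 0$). The one place where your write-up undersells itself is the final paragraph: the jump of $f=\chi_{(e^{-1},1]}(x)/x$ at $x=e^{-1}$ is not actually a delicate point requiring separate control of the mass of the $a_n$ with $z^n$ near $e^{-1}$. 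Since $f$ is bounded on $[0,1]$, one constructs continuous $g\le f\le h$ agreeing with $f$ outside a $\delta$-neighborhood of $e^{-1}$ (linear interpolation across the gap), so that $\int_0^1(h-g)\,dx=O(\delta)$. Positivity of the $a_n$ then gives, pointwise in $z\in[0,1)$,
\begin{equation*}
(1-z)\sum_{n\ge 0} a_n z^n g(z^n)\ \le\ (1-z)\sum_{n\ge 0} a_n z^n f(z^n)\ \le\ (1-z)\sum_{n\ge 0} a_n z^n h(z^n),
\end{equation*}
and the continuous case applied to $g$ and $h$ traps the $\liminf$ and $\limsup$ between $\int_0^1 g$ and $\int_0^1 h$, both within $O(\delta)$ of $\int_0^1 f=1$. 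Any ``narrow spike of mass at the jump'' is automatically absorbed by these one-sided bounds, so no auxiliary bump-function lemma is needed; with that simplification your proof is complete.
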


We may apply the Tauberian theorem directly. 

\begin{thm}\label{Schur asymptotic thm}
    $$A_{\lambda} = \lim_{n \rightarrow \infty} \frac{1}{n} \sum_{k\leq n} r_{\lambda}^{(k)}$$
\end{thm}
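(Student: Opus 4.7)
The plan is to apply the Hardy--Littlewood Tauberian theorem (Theorem \ref{tauberian}) to the restriction series $R_\lambda(z) = \sum_{n \geq 0} r_{(n),\lambda} z^n$, using Proposition \ref{Bell coefficients from restriction} to identify the limit as $A_\lambda$.

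First, I would observe that all the hypotheses of the Tauberian theorem are essentially in place. The coefficients $r_{(n),\lambda}$ are non-negative integers since they are dimensions of Hom-spaces. By Proposition \ref{Bell coefficients from restriction}, we have $\lim_{z \uparrow 1} (1-z) R_\lambda(z) = A_\lambda$, so $R_\lambda(z) \sim A_\lambda/(1-z)$ as $z \uparrow 1$.

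Next, assume $A_\lambda > 0$ and rescale. Set $a_n := r_{(n),\lambda}/A_\lambda \geq 0$. Then $\sum_{n \geq 0} a_n z^n = R_\lambda(z)/A_\lambda \sim 1/(1-z)$ as $z \uparrow 1$, so Theorem \ref{tauberian} applies and yields $\sum_{k \leq n} a_k \sim n$, which is exactly
\[
\frac{1}{n} \sum_{k \leq n} r_{(k),\lambda} \longrightarrow A_\lambda.
\]

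It remains to handle the degenerate case $A_\lambda = 0$ separately, since the rescaling breaks down. In this case, the polynomial $(1-z) R_\lambda(z)$ (shown to be a polynomial in the preceding proposition) vanishes at $z = 1$, so it is divisible by $(1-z)$; hence $R_\lambda(z)$ itself is a polynomial, which means $r_{(n),\lambda} = 0$ for all sufficiently large $n$. Therefore $\frac{1}{n}\sum_{k \leq n} r_{(k),\lambda} \to 0 = A_\lambda$ trivially.

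The main subtlety, rather than a serious obstacle, is the bookkeeping around the case $A_\lambda = 0$: the Tauberian theorem as stated requires the asymptotic $1/(1-z)$ exactly, so one cannot directly feed in $R_\lambda(z)$ when the leading constant is zero. The polynomiality of $(1-z) R_\lambda(z)$ makes this boundary case completely elementary. All other steps are essentially a rescaling followed by invoking the cited theorem.
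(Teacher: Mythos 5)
Your proposal is correct and follows essentially the same route as the paper: apply Proposition \ref{Bell coefficients from restriction} to get $R_\lambda(z)\sim A_\lambda/(1-z)$, invoke the Hardy--Littlewood Tauberian theorem, and dispose of the $A_\lambda=0$ case via polynomiality of $(1-z)R_\lambda(z)$. Your explicit rescaling by $A_\lambda$ to match the normalized hypothesis of Theorem \ref{tauberian} is a small point of extra care that the paper leaves implicit.
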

\begin{proof}
Suppose first that $A_{\lambda} \neq 0.$ We have that 
    $R_{\lambda}(z) = \sum_{n\geq 0} r_{\lambda}^{(n)} z^n$ so by Proposition \ref{Bell coefficients from restriction} we know $R_{\lambda}(z) \sim \frac{A_{\lambda}}{1-z}$ as $z \uparrow 1.$ Therefore, by Theorem \ref{tauberian} we find that $\frac{1}{n}\sum_{k \leq n} r_{\lambda}^{(k)} \sim A_{\lambda} $ as $n \rightarrow \infty.$ In the case that $A_{\lambda} = 0,$ the polynomial $(1-z)R_{\lambda}(z)$ has $z=1$ as a root and hence $R_{\lambda}(z)$ must itself be a polynomial. But then, $r_{\lambda}^{(k)}$ is only nonzero for finitely many $k$ and thus $\lim_{n\rightarrow \infty} \frac{1}{n}\sum_{k \leq n} r_{\lambda}^{(k)} = 0 = A_{\lambda}$ as required.
\end{proof}

Notice that $\rho(\mu) = f_{\mu}(1)$ so $A_{\lambda} = \sum_{\mu \vdash |\lambda|} K_{\lambda,\mu}^{(-1)} \rho(\mu).$ This sum has a (signed) combinatorial expansion using the results of Eğecioğlu–-Remmel on the inverse Kostka numbers \cite{ER90}. This leads to the pleasant formula
    $$\lim_{n \rightarrow \infty} \frac{1}{n} \sum_{k \leq n} r_{\lambda}^{(k)} = \sum_{\mu \vdash |\lambda|} K_{\lambda,\mu}^{(-1)} \rho(\mu).$$

\begin{remark}
    We may generalize the restriction series $R_{\lambda}(z)$ to a symmetric function $R_{\lambda}(Z) = R_{\lambda}(z_1,z_2,\ldots)$ by defining 
    $$R_{\lambda}(Z):= \langle s_{\lambda}(X), \Omega(Z\Omega(X)) \rangle.$$ The symmetric series $R_{\lambda}$ have previously been considered by Lee \cite{Lee24}. Using Lee's conventions, $R_{\lambda}(Z) = \mathcal{F}\{s_{\lambda} \}(Z)$ where $\mathcal{F}\{ - \}$ is the \textbf{Frobenius transform}. It is straightforward to check that $R_{\lambda}(Z) = \sum_{\mu} r_{\lambda}^{\mu} s_{\mu}(Z)$ and furthermore, $\Omega(-Z)R_{\lambda}(Z) = \langle s_{\lambda}(X), \Omega(Z\Omega_0(X)) \rangle$ is an inhomogeneous symmetric function with maximal degree at most $|\lambda|.$ In particular, 
    $\Omega(-Z)R_{\lambda}(Z) = \sum_{\mu \vdash |\lambda|} K_{\lambda,\mu}^{(-1)} F_{\mu}(Z)$ where 
    $F_{\mu}(Z):= \langle h_{\mu}(X), \Omega(Z \Omega_0(X)) \rangle .$ For all $\mu$, $F_{\mu}(Z)$ is explicitly given by 
    $$F_{\mu}(Z) = \sum_{\Phi \in Y(\mu)} z^{|\Phi|}$$ where 
    $Y(\mu)$ is the set of all $\Phi: \mathbb{Z}_{\geq 1} \times Q\setminus \{0\} \rightarrow \mathbb{Z}_{\geq 0}$ such that $\Phi_i(\alpha) \neq 0$ for only finitely many pairs $(i,\alpha) \in \mathbb{Z}_{\geq 1} \times Q\setminus \{0\} $ and $\sum_{(i,\alpha) \in \mathbb{Z}_{\geq 1} \times Q\setminus \{0\}} \Phi_i(\alpha) \alpha = \mu$ and where $|\Phi| \in Q$ is the vector $|\Phi|:= \left( \sum_{\alpha \in Q\setminus \{0\} }\Phi_i(\alpha) \right)_{i \geq 1}.$ In finitely many variables, $z_1,\ldots, z_k,$ this means that 
    $R_{\lambda}(z_1,\ldots,z_k) = \sum_{\ell(\mu) \leq k} r_{\lambda}^{\mu} s_{\mu}(z_1,\ldots,z_k)$ has the form $$R_{\lambda}(z_1,\ldots,z_k)= \frac{\sum_{\mu \vdash |\lambda|} K_{\lambda,\mu}^{(-1)} F_{\mu}(Z)}{(1-z_1)\cdots (1-z_k)}$$ where the numerator is a symmetric polynomial in the variables $z_1,\ldots,z_k$ with total degree at most $|\lambda|.$ It would be interesting to see if any \textbf{multi-variable Tauberian theorems} could be applied here to obtain other types of asymptotics for the restriction coefficients. 
\end{remark}

\printbibliography

\end{document}